\newtheorem{theorem}{Theorem}
\newtheorem{lemma}{Lemma}
\newcommand{\Exp}{\mathbb{E}}
\newcommand{\tcl}{\mathcal{T}}
\newcommand{\bigO}{{\rm O}}
\newcommand\dto{\overset{\mathrm{d}}{\to}}
\newcommand{\gpr}{\gamma_{\rm pr}}
\newcommand{\mpr}{\mu_{\rm pr}}
\newcommand{\spr}{\sigma_{\rm pr}}
\newcommand{\2}{\vspace{0.2cm}}
\newenvironment{unnumbered}[1]{\trivlist \item [\hskip \labelsep {\bf
#1}]\ignorespaces\it}{\endtrivlist}
\newcommand{\parent}{\mbox{{\rm parent}}}
\newcommand{\lineN}[1]{\mbox{#1.} \hspace{0.1cm}}
\newcommand{\trueX}{\mbox{true}}
\newcommand{\falseX}{\mbox{false}}
\newcommand{\InPD}{\mbox{{\rm In\_PD\_Set}}}
\newcommand{\PDchild}{\mbox{{\rm Dom\_by\_child}}}
\let\oldenumerate\enumerate
\renewcommand{\enumerate}{
  \oldenumerate
  \setlength{\itemsep}{1pt}
  \setlength{\parskip}{0pt}
  \setlength{\parsep}{0pt}
}
\def\vertex(#1){\put(#1){\circle*{2}}}
\def\vertexo(#1){\put(#1){\circle{2}}}
\def\vert(#1){\put(#1){\circle*{1.5}}}
\def\verto(#1){\put(#1){\circle{1.5}}}
\def\lab(#1)#2{\put(#1){\makebox(0,0)[c]{#2}}}
\begin{document}

\title{Paired domination in trees: \\ A linear algorithm and asymptotic normality}

\author{$^{1,2}$Michael A. Henning \, and \, $^3$Dimbinaina Ralaivaosaona \\
\\
$^1$Department of Mathematics and Applied Mathematics\\
University of Johannesburg \\
Auckland Park, 2006, South Africa \\
\small \tt Email: mahenning@uj.ac.za \\
\\
$^2$DSTI-NRF Centre of Excellence \\
in Mathematical and Statistical
Sciences (CoE-MaSS), South Africa \\
\\
$^3$Department of Mathematical Sciences \\
Stellenbosch University \\
Matieland, 7602, South Africa \\
\small \tt Email: naina@sun.ac.za
}

\date{}
\maketitle

\begin{abstract}
A set $S$ of vertices in a graph $G$ is a paired dominating set if every vertex of $G$ is adjacent to a vertex in $S$ and the subgraph induced by $S$ contains a perfect matching (not necessarily as an induced subgraph). The paired domination number, $\gpr(G)$, of $G$ is the minimum cardinality of a paired dominating set of $G$. We present a linear algorithm for computing the paired domination number of a tree. As an application of our algorithm, we prove that the paired domination number is asymptotically normal in a random rooted tree of order $n$ generated by a conditioned Galton--Watson process as $n\to\infty$. In particular, we have found that the paired domination number of a random Cayley tree of order $n$, where each tree is equally likely, is asymptotically normal with expectation approaching $(0.5177\ldots)n$.
\end{abstract}

\noindent {\small \textbf{Keywords:} Paired domination; Tree; Random trees; Linear algorithm, Conditioned Galton--Watson trees, Asymptotic normality} \\
\noindent {\small \textbf{AMS subject classification:} 05C69, 60C05}

\section{Introduction}

A \emph{dominating set} of a graph $G$ is a set $S \subseteq V(G)$ such that every vertex of $V(G)\setminus S$ is adjacent to some vertex in $S$. A set $S$ of vertices in $G$ is a \emph{total dominating set}, abbreviated TD-set,  of $G$ if every vertex in $G$ is adjacent to some other vertex in $S$. The \emph{domination number}, $\gamma(G)$, of $G$ is the minimum cardinality of a dominating set of $G$, and the \emph{total domination number}, $\gamma_t(G)$, of $G$ is the minimum cardinality of a TD-set of $G$. If $A$ and $B$ are two subsets of vertices in a graph $G$, then the set $A$ \emph{totally dominates} the set $B$ is every vertex in $B$ has a neighbor in the set $A$, where two vertices are neighbors if they are adjacent. For recent books on domination and total domination in graphs, we refer the reader to~\cite{HaHeHe-20,HaHeHe-21,HaHeHe-23,HeYe-book}.

A \emph{paired dominating set}, abbreviated PD-set, of an isolate-free graph $G$ is a dominating set $S$ of $G$ with the additional property that the subgraph $G[S]$ induced by $S$ contains a perfect matching $M$ (not necessarily induced). With respect to the matching $M$, two vertices joined by an edge of $M$ are \emph{paired} and are called \emph{partners} in $S$. The \emph{paired domination number}, $\gpr(G)$, of $G$ is the minimum cardinality of a PD-set of $G$. We call a PD-set of $G$ of cardinality $\gpr(G)$ a $\gpr$-\emph{set of $G$}. We note that the paired domination number $\gpr(G)$ is an even integer. For a recent survey on paired domination in graphs, we refer the reader to the book chapter~\cite{DeHaHe-20}.

We denote the \emph{degree} of a vertex $v$ in a graph $G$ by $\deg_G(v)$. A vertex of degree $0$ is called an \emph{isolated vertex}, and a graph is \emph{isolate}-\emph{free} if it contains no isolated vertex. The maximum (minimum) degree among the vertices of $G$ is denoted by $\Delta(G)$ ($\delta(G)$, respectively). A \emph{leaf} of a tree $T$ is a vertex of degree~$1$ in $T$, and a \emph{support vertex} of $T$ is a vertex with a leaf neighbor. The \emph{distance} $d(u,v)$ between two vertices $u$ and $v$ in a connected graph $G$, equals the minimum length of a $(u,v)$-path in $G$ from $u$ to $v$.

A \emph{rooted tree} $T$ distinguishes one vertex $r$ called the \emph{root}. For each vertex $v \ne r$ of $T$, the \emph{parent} of $v$ is the neighbor of $v$ on the unique $(r,v)$-path, while a \emph{child} of $v$ is any other neighbor of $v$. A \emph{descendant} of $v$ is a vertex $u \ne v$ such that the unique $(r,u)$-path contains $v$. We let $D(v)$ denote the set of descendants of $v$, and we define $D[v] = D(v) \cup \{v\}$. The \emph{maximal subtree} at $v$ is the subtree of $T$ induced by $D[v]$, and is denoted by $T_v$.

The aim of this paper is twofold: First, we present a simple bottom-up linear-time algorithm that solves the minimum paired domination problem for trees and prove its correctness. Our algorithm has a similar flavour to that of Mitchell, Cockayne, and Hedetniemi~\cite{MiCoHe-79} for the domination number problem.  Second, we establish that the paired domination number in random trees of order $n$ admits a nondegenerate Gaussian limit law as $n \to \infty$. The random trees considered here are conditioned Galton--Watson trees, and our bottom-up algorithm plays a crucial role in proving the result on the limiting distribution.

This paper is organised as follows: In Section~\ref{sec:2}, we focus on our new algorithm and prove its correctness. In Section~\ref{sec:3}, we analyse the limiting distribution of the paired domination number in conditioned Galton--Watson trees in general and provide explicit examples and simulations that are in support of our result.

\section{A paired domination tree algorithm}\label{sec:2}
\label{S:algorithm}

In 1979 Mitchell, Cockayne, and Hedetniemi~\cite{MiCoHe-79} and in 1984 Laskar, Pfaff, Hedetniemi, and Hedetniemi~\cite{LaPfHeHe-84} presented linear algorithms for computing the domination number and, respectively, the total domination number, of a tree. Their algorithms consider an arbitrary tree, which is rooted, and they systematically consider the vertices of the tree, starting from the vertices at furthest distance from the root, and carefully select a minimum (total) dominating set $S$ in such a way that the sum of the distances from the root to the vertices in $S$ is a minimum.

In 2003 Qiao, Kang, Cardei, and Du~\cite{QiKaCaDu-03} presented a linear-time algorithm for computing the paired domination number of a tree. The linear-time algorithm we present in this section has a similar flavour to the linear algorithms presented in~\cite{LaPfHeHe-84,MiCoHe-79}. In order to formally state our algorithm for computing the paired domination number of a tree, we first introduce the necessary notation.

Let $T$ be a rooted tree. We commonly draw the root $r$ of $T$ at the top with the remaining vertices at the appropriate level below $r$ depending on their distance from~$r$. We label the root with label~$1$. We label all children of the root (at level~$1$ from the root) next with labels $2, \ldots, \deg(1) + 1$. Next we label all children of the vertex~$2$ with labels $\deg(1) + 2, \ldots, \deg(1) + \deg(2) + 1$, and thereafter we label all children of the vertex~$3$, etc. Given such a rooted tree $T$ with root vertex labelled $1$ and with $V(T) = \{1,2,\ldots,n\}$, we represent $T$ by a data structure called a \emph{Parent array} in which the parent of a vertex labelled $i$ is given by $\parent{}[i]$ with $\parent[1] = 0$ (to indicate that the vertex labelled $1$ has no parent). We assume that the vertices of $T$ are labelled $1,2,\ldots,n$ so that for $i < j$, vertex $i$ is at level less than or equal to that of vertex $j$ (that is, $d(1,i) \le d(1,j)$). Figure~\ref{fig:root} shows an example of a rooted tree $T$\index{tree} with its parent array.

\begin{figure}[htb]
\begin{center}
\begin{tikzpicture}[scale=.8,style=thick,x=1cm,y=1cm]
\def\vr{2.5pt} 
\path (0,1) coordinate (v10);
\path (0,2) coordinate (v5);
\path (0,3) coordinate (v2);
\path (2,1) coordinate (v11);
\path (2,2) coordinate (v6);
\path (3,0) coordinate (v14);
\path (4,1) coordinate (v12);
\path (4,2) coordinate (v7);
\path (4,3) coordinate (v3);
\path (4,3.2) coordinate (v3p);
\path (4,4.5) coordinate (v1);
\path (5,0) coordinate (v15);
\path (6,1) coordinate (v13);
\path (6,2) coordinate (v8);
\path (8,2) coordinate (v9);
\path (8,3) coordinate (v4);
\draw (v10)--(v5)--(v2)--(v1)--(v4)--(v9);
\draw (v1)--(v3)--(v7)--(v12)--(v14);
\draw (v12)--(v15);
\draw (v11)--(v6)--(v3)--(v8)--(v13);
\draw (v1) [fill=white] circle (\vr);
\draw (v2) [fill=white] circle (\vr);
\draw (v3) [fill=white] circle (\vr);
\draw (v4) [fill=white] circle (\vr);
\draw (v5) [fill=white] circle (\vr);
\draw (v6) [fill=white] circle (\vr);
\draw (v7) [fill=white] circle (\vr);
\draw (v8) [fill=white] circle (\vr);
\draw (v9) [fill=white] circle (\vr);
\draw (v10) [fill=white] circle (\vr);
\draw (v11) [fill=white] circle (\vr);
\draw (v12) [fill=white] circle (\vr);
\draw (v13) [fill=white] circle (\vr);
\draw (v14) [fill=white] circle (\vr);
\draw (v15) [fill=white] circle (\vr);
\draw[anchor = south] (v1) node {{\small $1$}};
\draw[anchor = east] (v2) node {{\small $2$}};
\draw[anchor = east] (v5) node {{\small $5$}};
\draw[anchor = east] (v10) node {{\small $10$}};
\draw[anchor = east] (v6) node {{\small $6$}};
\draw[anchor = east] (v11) node {{\small $11$}};
\draw[anchor = west] (v3p) node {{\small $3$}};
\draw[anchor = west] (v7) node {{\small $7$}};
\draw[anchor = west] (v12) node {{\small $12$}};
\draw[anchor = west] (v15) node {{\small $15$}};
\draw[anchor = east] (v14) node {{\small $14$}};
\draw[anchor = west] (v8) node {{\small $8$}};
\draw[anchor = west] (v13) node {{\small $13$}};
\draw[anchor = west] (v4) node {{\small $4$}};
\draw[anchor = west] (v9) node {{\small $9$}};
\end{tikzpicture}
\end{center}
\[
\begin{array}{crcccccccccccccccl}
& & 1 & 2 & 3 & 4 & 5 & 6 & 7 & 8 & 9 & 10 & 11 & 12 & 13 & 14 & 15 &
\\
{\rm Parent} & [ & 0 & 1 & 1 & 1 & 2 & 3 & 3 & 3 & 4 & 5 & 6 & 7 & 8 &
12 & 12 & ]
\end{array}
\]
\caption{A rooted tree $T$ with its Parent array}
\label{fig:root}
\end{figure}
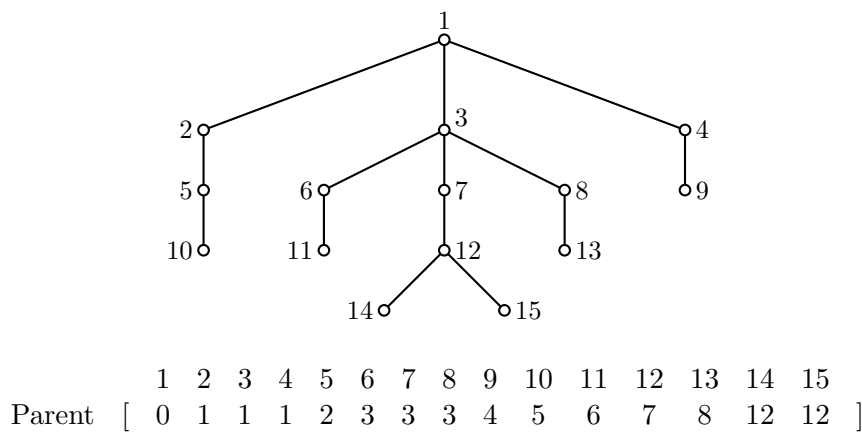

In Algorithm TREE PAIRED DOMINATION that follows, we will for each vertex, $i$, keep track of two boolean values, namely
\[
\InPD{}[i] \hspace*{0.5cm} \mbox{and} \hspace*{0.5cm} \PDchild[i].
\]

These values are initially set to $\falseX{}$.  As the algorithm progresses some of these values may change to true. Upon completion of the algorithm, all vertices, $i$, with $\InPD{}[i]=\trueX{}$ will be added to the PD-set. Further, all vertices with $\PDchild{}[i]=\trueX{}$ will be dominated by one of their children. We define vertex $0$ to be a \emph{dummy}-\emph{vertex} that does not exist and we define $\parent{}[1]=\parent{}[0]=0$. We will never change $\InPD{}[0]$ but may change $\PDchild{}[0]$.

\begin{unnumbered}{Algorithm TREE PAIRED DOMINATION} \textbf{:} \\ \\
\textbf{Input:} {\rm A rooted tree $T$ with $V(T) = \{1,2,\ldots,n\}$ rooted at $1$ where $n \ge 2$ (and where larger } \\
\hspace*{1.01cm} {\rm  values are further from the root) and
represented by an array $\parent[1 \ldots n]$ } \\
\textbf{Output:} {\rm An array $\InPD{}[]$ that indicates which vertices belong to the PD-set} \\
\textbf{Code:}
\begin{enumerate}
\item[] \lineN{1} {\rm \textbf{for} $i = 0$ \textbf{to} $n$ \textbf{do} $\{$ } \\
\lineN{2}         \hspace*{0.8cm} $\PDchild{}[i] = \falseX{}$ \\
\lineN{3}         \hspace*{0.8cm} $\InPD{}[i] = \falseX{}$ $\}$
\item[] \lineN{4} \textbf{for} $i = n$ \textbf{to} $2$ \textbf{do} \\
\item[]
\lineN{5}         \hspace*{0.8cm} \textbf{if} {\rm (}$\PDchild{}[i]=\falseX{}${\rm )}
\item[]           \hspace*{2cm} \textbf{and} {\rm (}$\InPD{}[\parent{}[i]]=\falseX{}${\rm )}
\item[]           \hspace*{2cm} \textbf{and} {\rm (}$\parent{}[i] \ne 1${\rm )}
\item[]           \hspace*{2cm} \textbf{and} {\rm (}$\InPD{}[\parent{}[\parent{}[i]]]=\falseX{}${\rm )}
\item[]
\item[]           {\rm (}we remark that in this case, no child of vertex $i$ is in the PD-set, and neither is the parent of vertex~$i$ nor the parent of the parent of vertex~$i$; further, the parent of vertex~$i$ is not the vertex~$1${\rm )}
\item[]
\item[]           \hspace*{1.5cm} \textbf{then} $\{$ \\
\lineN{6}         \hspace*{1.6cm} $\InPD{}[\parent{}[i]] =\trueX{}$ \\
\lineN{7}         \hspace*{1.6cm} $\InPD{}[\parent{}[\parent{}[i]]] =\trueX{}$ \\
\lineN{8}         \hspace*{1.6cm} $\PDchild{}[\parent{}[\parent{}[i]]] = \trueX$ \\
\lineN{9}         \hspace*{1.6cm} $\PDchild{}[\parent{}[\parent{}[\parent{}[i]]]] = \trueX$  (if the vertex exists)$\}$
\item[]
\item[]
{\rm (}we remark that in this case, we add the parent of vertex~$i$ and the parent of the parent of vertex~$i$ to the PD-set, and these two added vertices are paired in the PD-set. Further, we note that the parent of the parent of vertex~$i$ now is dominated by one of its children, and so we update $\PDchild{}[\parent{}[i]${\rm )}  \\
\item[]
\lineN{10}         \hspace*{0.8cm} \textbf{if} {\rm (}$\PDchild{}[i]=\falseX{}${\rm )}
\item[]           \hspace*{2cm} \textbf{and} {\rm (}$\InPD{}[\parent{}[i]]=\falseX{}${\rm )}
\item[]           \hspace*{2cm} \textbf{and} {\rm (}$\parent{}[i] = 1${\rm )} \textbf{or} {\rm (}$\InPD{}[\parent{}[\parent{}[i]]]=\trueX{}${\rm )}
\item[]
\item[]           {\rm (}we remark that in this case, the PD-set contains no child of vertex $i$  and does not contain the parent of vertex~$i$, but either the parent of vertex~$i$ is the vertex~$1$ or the PD-set contains the parent of the parent of vertex~$i$.{\rm )}
\item[]
\item[]           \hspace*{1.5cm} \textbf{then} $\{$ \\
\lineN{11}         \hspace*{1.6cm} $\InPD{}[\parent{}[i]] =\trueX{}$ \\
\lineN{12}         \hspace*{1cm} \textbf{and if} $i_1$ is the child of $\parent{}[i]$ of largest label such that $\InPD{}[i_1] = \falseX{}$
\item[]           \hspace*{2.4cm} \textbf{then} $\{$ \\
\lineN{13}         \hspace*{3cm} $\InPD{}[i_1] = \trueX{}$ \\
\lineN{14}         \hspace*{3cm} $\PDchild{}[\parent{}[i]] = \trueX$ $\}$ $\}$
\item[]
\item[]           {\rm (}we remark that in this case, we add the parent of vertex~$i$ and a child of the parent of vertex~$i$ of the largest label to the PD-set, and these two added vertices are paired in the PD-set. Further, we note that the parent of vertex~$i$ now is dominated by one of its children, and so we update $\PDchild{}[\parent{}[i]${\rm )}  \\
\item[]
\lineN{15} \textbf{if} {\rm (}$\PDchild{}[1]=\falseX{}${\rm )}
\item[]           \hspace*{1.5cm} \textbf{then} $\{$ \\
\lineN{16} \hspace*{2.4cm} $\InPD{}[1] =\trueX{}$ \\
\lineN{17} \hspace*{2.4cm} $\InPD{}[2] =\trueX{}$ \\
\lineN{18}  \hspace*{2.4cm} $\PDchild{}[1] = \trueX$ $\}$
\item[]
\item[]           {\rm (}we remark that in this case, no child of the vertex~$1$ is in the PD-set, and so we add to the PD-set the vertices~$1$ and~$2$, and these two added vertices are paired in the PD-set{\rm )}
\end{enumerate}
\end{unnumbered}

We prove next that algorithm {\small \textbf{TREE PAIRED DOMINATION}} produces a $\gpr$-set of $T$.

\begin{theorem}
\label{tree:alg}
If $T$ is tree of order~$n \ge 2$, then the set $S$ produced by algorithm {\small
\textbf{TREE PAIRED DOMINATION}} defined by
\[
S = \{ i \in [n] \colon \InPD{}[i] =\trueX{} \}
\]
is a $\gpr$-set of $T$.
\end{theorem}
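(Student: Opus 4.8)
The plan is to prove two statements: (i) the set $S$ returned by the algorithm is a PD-set of $T$, which yields $\gpr(T)\le |S|$; and (ii) $|S|\le \gpr(T)$, obtained by an exchange argument that transforms an arbitrary $\gpr$-set of $T$ into one that contains $S$. Together these give $|S|=\gpr(T)$.

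\textbf{$S$ is a PD-set.} The perfect matching is produced directly by the code: every time an $\InPD$-entry is set to $\trueX$, two \emph{adjacent} vertices are set simultaneously and both were previously outside $S$ --- the pair $\{\parent[i],\parent[\parent[i]]\}$ at lines~6--7, the pair $\{\parent[i],i_1\}$ (a vertex and one of its children) at lines~11 and~13, or the pair $\{1,2\}$ at lines~16--17 --- so the union of these pairs is a perfect matching of $G[S]$. Two sub-claims need checking: that whenever line~11 executes there is a child $i_1$ of $\parent[i]$ with $\InPD[i_1]=\falseX$ (this follows because the guard of line~10 forces $\InPD[i]=\falseX$: every earlier event that could have inserted $i$ into $S$ would also have set $\InPD[\parent[i]]$ or $\PDchild[i]$ to $\trueX$, contradicting the guard, so $i$ itself is an admissible choice of $i_1$); and that the pairs are vertex-disjoint, which is immediate since each pair consists of two vertices freshly switched from $\falseX$ to $\trueX$. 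For domination the loop invariant is that once vertex $i$ has been processed it has a neighbour lying permanently in $S$: if $\PDchild[i]=\trueX$ a child of $i$ was added to $S$ alongside the assignment (lines~8, 9, or 14); if $\InPD[\parent[i]]=\trueX$ then $\parent[i]\in S$; and otherwise both entries are $\falseX$, in which case one checks that $i$ is genuinely undominated (no child of $i$ lies in $S$, and $\parent[i]\notin S$), that the first two conditions of the guards at lines~5 and~10 then hold, and that exactly one of the two \textbf{if}-blocks fires --- the extra conditions at line~5 being precisely the negation of the disjunctive condition at line~10 --- so $\InPD[\parent[i]]$ becomes $\trueX$. This dominates every vertex of $\{2,\dots,n\}$; vertex~$1$ is dominated by a child precisely when $\PDchild[1]=\trueX$, and otherwise lines~15--18 insert its child~$2$.

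\textbf{Minimality.} I would maintain, throughout the main loop, the invariant that there exists a $\gpr$-set $S^*$ of $T$ such that: (a) every vertex currently marked $\InPD$ lies in $S^*$; (b) $S^*$ is consistent with the bookkeeping (for each already-processed $v$ with $\PDchild[v]=\trueX$, some child of $v$ already lies in the selected part of $S^*$); and (c) $S^*$ has been pushed toward the root as far as possible, meaning that inside each maximal subtree $T_j$ at an already-processed vertex $j$, $S^*$ selects only vertices already selected by the algorithm. When vertex $i$ is processed and a guard fires, $i$ is genuinely undominated, so by (a) and (c) the set $S^*$ must dominate $i$ through $\parent[i]$; one then modifies $S^*$ --- keeping $|S^*|$ fixed and preserving both the domination property and the perfect-matching property --- so that it contains precisely the pair the algorithm has just added, re-establishing (a)--(c) for the updated state. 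If no guard fires, $i$ is already dominated and the invariant persists unchanged. After the loop the only possible further insertion is the pair $\{1,2\}$ at lines~15--18, which is treated by the same kind of local exchange, using that when $\PDchild[1]=\falseX$ the set $S^*$ must dominate vertex~$1$ through one of its children. At termination $S\subseteq S^*$; since $S$ is itself a PD-set and $S^*$ is a $\gpr$-set, this forces $S=S^*$, hence $|S|=\gpr(T)$.

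\textbf{The main obstacle} is the exchange step in the minimality proof. Because $S^*$ is required to induce a \emph{perfect matching}, a single vertex cannot be deleted from it without orphaning that vertex's partner, so the exchange has to move vertices two at a time and one must verify that the modified set still induces a perfect matching and still dominates every vertex. Carrying this out needs a case analysis on the configuration of $S^*$ near $\parent[i]$ and $\parent[\parent[i]]$ --- how many siblings of $i$, and of $\parent[i]$, lie in $S^*$; whether $\parent[\parent[i]]\in S^*$; and the boundary cases $\parent[i]=1$ and $\parent[\parent[i]]$ nonexistent --- producing in each case a two-for-two swap that realises the algorithm's choice with no increase in cardinality. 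A lesser, purely bookkeeping, difficulty is confirming that the flags $\PDchild[\cdot]$ are faithful enough for the case distinctions at lines~5, 10, and 15 to be exhaustive, including the degenerate roles of the dummy vertex~$0$ and of the vertex~$1$.
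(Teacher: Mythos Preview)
Your proposal is correct and follows essentially the same route as the paper: both arguments verify that $S$ is a PD-set directly from the code, then maintain through the main loop the invariant that the current partial selection is contained in some $\gpr$-set of $T$ (your $S^*$, the paper's $Q_i$), re-establishing it after each insertion by an exchange argument and handling the root separately at the end. The paper's invariant is slightly leaner---it records only $S(i)\subseteq Q_i$ and that $S(i)$ totally dominates $\{i,\dots,n\}$, without your clause~(c)---but the case analysis you anticipate as the ``main obstacle'' is exactly what the paper carries out in its Cases~1 and~2.
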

\begin{proof} Let $S$ be the set produced by algorithm {\small \textbf{TREE PAIRED DOMINATION}}, and so $S = \{ i \in [n] \colon \InPD{}[i] =\trueX{} \}$. By construction, $S$ is a PD-set of $T$. Hence it remains to show that $S$ is a $\gpr$-set of $T$. Let $S(i)$ denote all vertices, $j$, with $\InPD{}[j]=\trueX{}$, immediately after performing line~$14$ in our tree algorithm with the value $i$. We show that property $P(i)$ below holds for all $i=n,n-1,n-2,\ldots,2$, by induction.

\2
\noindent \textbf{Property $P(i)$:} The following properties hold. \\ [-22pt]
\begin{enumerate}
\item[1.] The set $S(i)$ totally dominates the set $\{i,i+1,i+2,\ldots,n\}$.
\item[2.] The set $S(i) \subseteq Q_i$ for some $\gpr$-set $Q_i$ of $T$.
\end{enumerate}

\2

We first consider Property $P(n)$ (and so here $i = n$). If $\parent{}[n] = 1$, then $T$ is a star rooted at its central vertex and $S(n) = \{1,2\}$ and $\PDchild{}[1]=\trueX{}$. If $\parent{}[n] \ne 1$, then $S(n) = \{\parent{}[n],\parent{}[\parent{}[n]]\}$ and $\PDchild{}[\parent{}[\parent{}[n]]] = \trueX$. In both cases, the set $S(n)$ contains the parent of vertex~$n$, and therefore $S(n)$ totally dominates the set $\{n\}$. Let $Q_n$ be an arbitrary $\gpr$-set of $T$. Moreover since the parent of vertex~$n$ is a support vertex (with leaf neighbor~$n$), every $\gpr$-set of $T$ contains the parent of vertex~$n$. In particular, $\parent{}[n] \in Q_n$.

Suppose that $\parent{}[n] = 1$. In this case, $\gpr(T) = 2$ and $Q_n$ contains a leaf neighbor of $\parent{}[n]$. If vertex~$2$ does not belong to~$Q_n$, then replacing the leaf neighbor of $\parent{}[n]$ that belongs to $Q_n$ with the vertex~$2$ produces a $\gpr$-set of $T$ that is identically the set $S(n)$. Suppose that $\parent{}[n] \ne 1$. In this case, if $\parent{}[\parent{}[n]]$ does not belong to the set $Q_n$, then replacing the leaf neighbor of $\parent{}[n]$ that belongs to $Q_n$ with the vertex~$\parent{}[\parent{}[n]]$ produces a new $\gpr$-set of $T$ that contains the set $S(n)$. Furthermore, the array $\PDchild{}$ is updated correctly noting that in both cases $\PDchild{}[\parent{}[n]]=\trueX{}$ upon completion of line~$14$ in our tree algorithm with the value $n$. Thus, property $P(n)$ holds. This establishes the base case.

For the inductive hypothesis, let $2 \le i < n$ and assume that property $P(i+1)$ holds. Thus, the set $S(i+1)$ totally dominates the set $\{i+1,i+2,\ldots,n\}$. Moreover, $S(i+1) \subseteq Q_{i+1}$ for some $\gpr$-set $Q_{i+1}$ of $T$, and the array $\PDchild{}$ is updated correctly. For notation convenience, let $p_i$ be the parent of vertex~$i$ and if $p_i \ne 1$, then let $g_i$ be the grandparent of vertex~$i$. Thus, $p_i = \parent{}[i]$ and $g_i = \parent{}[\parent{}[i]]$.

We now show that property $P(i)$ holds. Consider when the vertex~$i$ is first considered in the algorithm. Thus at this stage, the vertex~$i+1$ has already been analyzed by the algorithm and the next vertex to be considered is vertex~$i$.  If $\PDchild{}[i]=\trueX{}$ or $\InPD{}[p_i]=\trueX{}$, then the vertex $i$ is totally dominated by $S(i+1)$ and therefore $S(i+1)$ totally dominates $\{i,i+1,i+2,\ldots,n\}$. In this case, we let $S(i) = S(i+1)$ and $Q_i = Q_{i+1}$ and note that $S(i) \subseteq Q_{i+1} = Q_i$, which implies that $S(i)$ is a subset of a $\gpr$-set of $T$, namely $Q_i$.  Hence we may assume that $\PDchild{}[i]=\falseX{}$ and $\InPD{}[p_i]=\falseX{}$, for otherwise Property $P(i)$ holds as desired.

We now consider the $\gpr$-set, $Q_{i+1}$, of $T$ that contains the set $S(i+1)$. We note that the set $Q_{i+1}$ exists by the inductive hypothesis. Some vertex, $t$, in $Q_{i+1}$ must totally dominate vertex~$i$. Thus either $t$ is the parent of vertex~$i$ or a child of vertex~$i$. Let $t'$ be the vertex paired with $t$ in the set $Q_{i+1}$. We consider next the two possible outcomes of lines~5 and~10 when the vertex~$i$ is the current vertex considered.

\medskip
\emph{Case~1. The statements in line~5 hold; that is, } \\ [-20pt]
\begin{enumerate}
\item[$\bullet$] $\PDchild{}[i]=\falseX{}$,
\item[$\bullet$] $\InPD{}[p_i]=\falseX{}$,
\item[$\bullet$] $p_i \ne 1$, and $\InPD{}[g_i]=\falseX{}$.
\end{enumerate}

Thus the set $S(i+1)$ contains no child of vertex~$i$, and neither $p_i$ nor $g_i$ belong to $S(i+1)$. According to lines~$6$ and~$7$, we have $S(i) = S(i+1) \cup \{p_i,g_i\}$.

Suppose firstly that $\{p_i,g_i\} \subseteq Q_{i+1}$. In this case, we let $Q_i = Q_{i+1}$ and note that $S(i) \subseteq Q_i$. Moreover since $p_i \in S(i)$, the set $S(i)$ totally dominates the set $\{i,i+1,i+2,\ldots,n\}$. Since $\PDchild{}[g_i] = \trueX$ in line~$8$, the array $\PDchild{}$ is updated correctly. Hence, we may assume that $p_i \notin Q_{i+1}$ or $g_i \notin Q_{i+1}$ (or both $p_i \notin Q_{i+1}$ and $g_i \notin Q_{i+1}$).

Suppose secondly that $p_i \notin Q_{i+1}$, implying that $t$ is a child of $i$. Let $T_i$ be the subtree of $T$ rooted at the vertex~$i$. By the inductive hypothesis, the set $S(i+1)$ totally dominates the set $\{i+1,i+2,\ldots,n\}$. By supposition, $\PDchild{}[i]=\falseX{}$, and so no child of vertex~$i$ belongs to the set $S(i+1)$. Thus the set $S(i+1)$ totally dominates all vertices in the subtree $T_i$, except for the vertex~$i$. By construction, the set $S(i+1)$ is therefore a PD-set of $T_i - i$. Let $S_i' = V(T_i) \cap S(i+1)$, and let $Q_i' = V(T_i) \cap Q_{i+1}$. Since $S(i+1) \subset Q_{i+1}$ and $t \in Q_{i+1} \setminus S(i+1)$, we note that $S_i' \subset Q_i'$, implying that $|S_i'| \le |Q_i'| - 2$. If $g_i \notin Q_{i+1}$, then we let $Q_i = (Q_{i+1} \setminus Q_i') \cup (S_i' \cup \{p_i,g_i\})$. If $g_i \in Q_{i+1}$, then we let $Q_i = (Q_{i+1} \setminus Q_i') \cup (S_i' \cup \{p_i,i\})$. In both cases, $S(i) \subseteq Q_i$. Further, the set $Q_i$ is a PD-set of cardinality~$|Q_i| = |Q_{i+1}| - |Q_i'| + (|S_i'| + 2) \le |Q_{i+1}| = \gpr(T)$, and is therefore a $\gpr$-set of $T$. Hence if $p_i \notin Q_{i+1}$, then we have shown that the set $S(i)$ totally dominates the set $\{i,i+1,i+2,\ldots,n\}$ and is contained in some $\gpr$-set of $T$, namely the set $Q_i$ defined above.

Suppose thirdly that $p_i \in Q_{i+1}$ and  $g_i \notin Q_{i+1}$. In this case we can choose the vertex~$t$ to be the vertex~$p_i$, that is, $t = p_i$. Let $t'$ be the vertex paired with $t$ in the PD-set $Q_{i+1}$. Since $g_i \notin Q_{i+1}$, the vertex $t'$ is a child of $p_i$. Possibly, $t' = i$. If $t'$ is a leaf, then we let $Q_i = (Q_{i+1} \setminus \{t'\}) \cup \{g_i\}$ with $p_i$ and $g_i$ paired in $Q_i$, and with all other pairings in $Q_i$ the same as the pairings in $Q_{i+1} \setminus \{t,t'\}$. As before, $S(i) \subseteq Q_i$ and the set $S(i)$ totally dominates the set $\{i,i+1,i+2,\ldots,n\}$. Hence, we may assume that $t'$ is not a leaf. Let $T'$ be the subtree of $T$ rooted at the vertex~$t'$, and let $j$ be an arbitrary child of $t'$. We note that $j \ge i+1$, and so by the inductive hypothesis the set $S(j)$ totally dominates all vertices in the subtree $T_{j}$ rooted at vertex~$j$. Since $S(j) \subseteq S(i+1)$, we infer that all vertices in $T'$ are totally dominated by $S(i+1)$, except possibly for the vertex~$t'$. Hence as before, we let $Q_i = (Q_{i+1} \setminus \{t'\}) \cup \{g_i\}$ with $p_i$ and $g_i$ paired in $Q_i$, to produce the desired result.

\medskip
\emph{Case~2. The statements in line~10 hold; that is,} \\ [-20pt]
\begin{enumerate}
\item[$\bullet$] $\PDchild{}[i]=\falseX{}$,
\item[$\bullet$] $\InPD{}[p_i]=\falseX{}$,
\item[$\bullet$] $p_i = 1$ or $p_i \ne 1$ and $\InPD{}[g_i] = \trueX{}$.
\end{enumerate}

Thus, the set $S(i+1)$ does not contain the parent $p_i$ of vertex~$i$ and contains no child of vertex~$i$. However, either $p_i = 1$ or $p_i \ne 1$ and $\InPD{}[g_i]=\trueX{}$. By the inductive hypothesis, $S(i+1) \subseteq Q_{i+1}$. In particular, if $p_i \ne 1$, then we infer that $g_i \in Q_{i+1}$. According to lines~$11$--$14$, we have $S(i) = S(i+1) \cup \{p_i,i_1\}$ where $i_1$ is the child of $\parent{}[i]$ of largest label such that $\InPD{}[i_1] = \falseX{}$. Possibly, $i_1 = i$.

Suppose firstly that $\{p_i,i_1\} \subseteq Q_{i+1}$. In this case, we let $Q_i = Q_{i+1}$ and note that $S(i) \subseteq Q_i$. Moreover since $p_i \in S(i)$, the set $S(i)$ totally dominates the set $\{i,i+1,i+2,\ldots,n\}$. Since $\PDchild{}[p_i] = \trueX$ in line~$14$, the array $\PDchild{}$ is updated correctly. Hence, we may assume that $p_i \notin Q_{i+1}$ or $i_1 \notin Q_{i+1}$ (or both $p_i \notin Q_{i+1}$ and $i_1 \notin Q_{i+1}$).

Suppose secondly that $p_i \notin Q_{i+1}$, implying that $t$ is a child of $i$. Let $T_i$ be the subtree of $T$ rooted at the vertex~$i$. By the inductive hypothesis, the set $S(i+1)$ totally dominates the set $\{i+1,i+2,\ldots,n\}$. By supposition, no child of vertex~$i$ belongs to the set $S(i+1)$. Thus the set $S(i+1)$ totally dominates all vertices in the subtree $T_i$, except for the vertex~$i$. We proceed now analogously as in Case~1. Adopting our notation in Case~1, let $S_i' = V(T_i) \cap S(i+1)$ and $Q_i' = V(T_i) \cap Q_{i+1}$. In this case, $|S_i'| \le |Q_i'| - 2$. If $i_1 \notin Q_{i+1}$, then we let $Q_i = (Q_{i+1} \setminus Q_i') \cup (S_i' \cup \{p_i,i_1\})$. If $i_1 \in Q_{i+1}$, then we let $Q_i = (Q_{i+1} \setminus Q_i') \cup (S_i' \cup \{p_i,i\})$. In both cases, $S(i) \subseteq Q_i$, where $Q_i$ is a $\gpr$-set of $T$.

Suppose thirdly that $p_i \in Q_{i+1}$ and $i_1 \notin Q_{i+1}$. In this case we choose the vertex~$t$ to be the vertex~$p_i$, that is, $t = p_i$. Let $t'$ be the vertex paired with $t$ in the PD-set $Q_{i+1}$.

Suppose that the vertex $t'$ is a child of $p_i$. Possibly, $t' = i$. If $t'$ is a leaf, then we let $Q_i = (Q_{i+1} \setminus \{t'\}) \cup \{i_1\}$ with $p_i$ and $i_1$ paired in $Q_i$, and with all other pairings in $Q_i$ the same as the pairings in $Q_{i+1} \setminus \{t,t'\}$. As before, $S(i) \subseteq Q_i$ and the set $S(i)$ totally dominates the set $\{i,i+1,i+2,\ldots,n\}$. Hence, we may assume that $t'$ is not a leaf. Let $T'$ be the subtree of $T$ rooted at the vertex~$t'$, and let $j$ be an arbitrary child of $t'$. We note that $j \ge i+1$, and so by the inductive hypothesis the set $S(j)$ totally dominates all vertices in the subtree $T_{j}$ rooted at $j$. Since $S(j) \subseteq S(i+1)$, we infer that all vertices in $T'$ are totally dominated by $S(i+1)$, except possibly for the vertex~$t'$. Hence as before, we let $Q_i = (Q_{i+1} \setminus \{t'\}) \cup \{i_1\}$ with $p_i$ and $i_1$ paired in $Q_i$, to produce the desired result.

Hence we may assume that the vertex $t'$ cannot be chosen to be a child of $p_i$, implying that $t' = g_i$. According to our algorithm, since $g_i \in Q_{i+1}$ and $p_i \notin S(i+1)$, the vertex $g_i$ was added to the set $S(i+1)$ when a grandchild $j$ of $g_i$ was considered in line~$5$ of the algorithm for some $j \ge i+1$. In this case, denoting the parent of vertex~$j$ by $p_j$, and so $p_j = \parent{}[j]$, our tree algorithm added to the set $S(j+1)$ the vertices $p_j$ and $g_i$ to produce the set $S(j)$ upon completion of line~$14$ in our tree algorithm with the value $j$. By construction of the set $S$, we note that $S(j) \subseteq S(i+1)$, and so $\{p_j,g_i\} \subseteq Q_{i+1}$.

By assumption, the vertices $t = p_i$ and $t' = g_i$ are paired in $Q_{i+1}$. Let $p_j'$ be the vertex paired with $p_j$ in the set $Q_{i+1}$. Since the parent, $g_i$, of vertex $p_j$ is paired with vertex $p_i$, the vertex $p_j'$ is a child of $p_j$. Let $T_j$ be the subtree of $T$ rooted at the vertex~$j$. By the inductive hypothesis, the set $S(j)$, which is contained in the set $S(i+1)$, totally dominates all descendants of the vertex~$j$, implying that the vertex $p_j'$ is not needed to totally dominate any descendant of the vertex~$j$. Hence, the vertex~$p_j'$ is needed only to partner the vertex~$p_j$. We let $Q_i = (Q_{i+1} \setminus \{g_j\}) \cup \{i_1\}$, where the vertices $i_1$ and $p_i$ are paired and the vertices $p_j$ and $g_i$ are paired and where all other pairing of vertices in $Q_{i+1}$ remain unchanged. The resulting set $Q_i$ is a PD-set of $T$ satisfying $|Q_i| = |Q_{i+1}|$. Furthermore, $S(i) \subseteq Q_i$. Thus, $Q_i$ is a $\gpr$-set of $T$ containing the set $S(i)$. Thus, property $P(i)$ also holds in Case~2.

By Cases~1 and~2, property $P(i)$ holds for all $i=n,n-1,n-2,\ldots,2$, by induction. In particular, property~$P(2)$ holds, and so the set $S(2)$ totally dominates the set $\{2,3,\ldots,n\}$. Moreover, the set $S(2) \subseteq Q_2$ for some $\gpr$-set $Q_2$ of $T$. Thus, $S(2)$ is a subset of an optimal solution $Q_2$, and $S(2)$ totally dominates all vertices in $G$, except possibly for vertex~$1$. Therefore if vertex~$1$ is totally dominated by $S(2)$, then $S(2) = Q_2$ and $S(2)$ is an optimal solution. In this case, we take $Q_1 = Q_2 = S_2$, and the desired result follows.

Hence we may assume that vertex~$1$ is not totally dominated by $S(2)$. In this case, some vertex, $z$, in $Q_{2}$ must totally dominate vertex~$1$. Thus, $z$ is a child of vertex~$1$. Since $z \notin S(2)$ and both $|S(2)|$ and $|Q_2|$ are even, we infer that $|S(2)| \le |Q_2| - 2$. We now let $Q_1 = S(2) \cup \{1,2\}$ with the vertices $1$ and $2$ paired in $Q_1$ and with all other pairing in $S(2)$ unchanged. The resulting set $Q_1$ is a PD-set of $T$. We infer that $|Q_2| = \gpr(T) \le |Q_1| = |S(2)| + 2 \le |Q_2| = \gpr(T)$, and so we must have equality throughout this inequality chain. In particular, $|Q_1| = \gpr(T)$, implying that $Q_1$ is a $\gpr$-set of $T$. Furthermore, $S(1) \subseteq Q_1$. Thus, $Q_1$ is a $\gpr$-set of $T$ containing the set $S(1)$. Thus, property $P(1)$ holds. Since $S(1)$ is a PD-set of $T$ and $|S(1)| \le |Q_1| = \gpr(T)$, the set $S(1)$ is an optimal solution, that is, $S(1)$ is a $\gpr$-set of $T$. Since $S = S(1)$, our algorithm {\small \textbf{TREE PAIRED DOMINATION}} does indeed find an optimal solution. This completes the proof of Theorem~\ref{tree:alg}.
\end{proof}

\medskip
We remark that the time complexity of the algorithm is $\bigO(n)$ for trees of order $n$. When we run our algorithm on the rooted tree $T$ in Figure~\ref{fig:root} the following steps will be performed, yielding the $\gpr$-set $\{12,11,8,7,6,5,4,3,2,1\}$ as a minimum PD-set in $T$. In particular, we note that $\gpr(T) = 10$.

\2
\begin{table}[htb]
    \centering
\begin{tabular}{|c|l|r|c|l|r|} \cline{1-2} \cline{4-5}
$i$ & action  &  \hspace{1cm} & $i$ & action \\  \cline{1-2} \cline{4-5}
15  & $\InPD{}[7]=\InPD{}[12]=\trueX{}$  &  & 7  & none \\
14  & none                               &  & 6  & none \\
13  & $\InPD{}[3]=\InPD{}[8]=\trueX{}$   &  & 5  & none   \\
12  & none                               &  & 4  & none               \\
11  & $\InPD{}[6]=\InPD{}[11]=\trueX{}$  &  & 3  & none                  \\
10  & $\InPD{}[2]=\InPD{}[5]=\trueX{}$   &  & 2  & none                  \\
9  & $\InPD{}[1]=\InPD{}[4]=\trueX{}$    &  & 1  & none               \\
8  & none                                &  &    &               \\ \cline{1-2} \cline{4-5}
\end{tabular}
\caption{Algorithm {\small \textbf{TREE PAIRED DOMINATION}} implemented on the tree $T$ in Figure~\ref{fig:root}}
    \label{tab:algo}
\end{table}

\section{Paired domination in random trees}\label{sec:3}

In this section, we are interested in the limiting distribution of paired domination in random trees of order $n$ as $n \to \infty$. The random tree model we have in mind is the so-called \emph{conditioned Galton--Watson tree}, which forms a general model that includes many combinatorial models as special cases.

We assume that all trees are rooted and ordered, i.e., the order of the root branches matters. Given a nonnegative integer-valued random variable $\xi$, the Galton--Watson tree $\tcl$ is a random ordered rooted tree generated by the following process: Begin with a root vertex and create a certain number of children according to $\xi$. Each child then independently generates a number of children according to an independent copy of $\xi$. This process continues for each newly generated vertex until all vertices in the tree have been processed.

Note that in general, such a process may not stop in finite time. Here, we consider only critical Galton--Watson trees where $\mathbb{E}(\xi) = 1$. Furthermore, we assume throughout that
\begin{itemize}
\item $\Pr(\xi = 0) > 0$, and
\item $\gcd \{i \mid \Pr(\xi = i) > 0\} = 1$.
\end{itemize}
Under these conditions, it is well known that $\tcl$ is finite almost surely and that $\Pr(|\tcl| = n) > 0$ for sufficiently large $n$.

The process above defines a probability distribution on the set of finite rooted ordered trees, where, for a given rooted tree $T$,
\[
\Pr(\tcl = T) = \prod_{j \geq 0} \Pr(\xi = j)^{D_j(T)},
\]
and $D_j(T)$ denotes the number of vertices in $T$ with outdegree $j$. This induces a probability distribution on the set of rooted trees of order $n$ by conditioning $\tcl$ to have exactly $n$ vertices. The tree generated under this conditioning is called the conditioned Galton--Watson tree and is denoted by $\tcl_n$. Many models of combinatorial trees can be obtained in this way.

\begin{itemize}
\item {\it Plane trees}: Among all rooted, unlabelled, plane (ordered) trees of order $n$, pick a random tree where each tree is equally likely. This model is equivalent to the conditioned Galton--Watson tree with offspring distribution $\xi \sim \mathrm{Geo}(\tfrac{1}{2})$.

\item {\it Binary trees}: A plane binary tree is a rooted, unlabelled plane tree where each vertex has outdegree at most two. The uniform binary tree model is equivalent to the conditioned Galton--Watson tree with offspring distribution $\xi \sim \mathrm{Bin}(2, \tfrac{1}{2})$.

\item {\it Rooted labelled trees}: Pick one tree uniformly at random among all rooted unordered labelled trees of order $n$. This model also produces a random uniform Cayley tree of order $n$ (by simply ignoring the root) and is equivalent to the conditioned Galton--Watson tree with offspring distribution $\xi \sim \mathrm{Pois}(1)$.

\end{itemize}

We are now ready to state the main result of this section.

\begin{theorem}\label{thm:clt}
Let $\tcl$ be the Galton--Watson tree with offspring distribution $\xi$ such that $\mathbb{E}(\xi) = 1$ and $0 < \mathrm{Var}(\xi) < \infty$. Let $\tcl_n$ denote the corresponding conditioned Galton--Watson tree of order $n$. Then, there exist constants $\mpr > 0$ and $\spr > 0$ such that as $n \to \infty$,
\[
\Exp \gpr(\tcl_n)=\mpr n +o(\sqrt{n})
\
\text{\ and\ }
\
\frac{\gpr(\tcl_n)-\mpr n}{ \spr \sqrt{n}} \dto \mathcal{N}(0,1).
\]
\end{theorem}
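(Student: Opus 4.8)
\emph{Strategy, and Step~1 (a finite-state recursion for $\gpr$).} The plan is to read off from Algorithm \textbf{TREE PAIRED DOMINATION} --- or rather from the dynamic program it evaluates, whose optimality is exactly Theorem~\ref{tree:alg} --- that $\gpr(T)$ is, up to a bounded error term, a sum over the vertices of a \emph{bounded} toll function depending only on the fringe subtree rooted at the vertex; the general limit theory for additive functionals of conditioned Galton--Watson trees then does the rest. Since $\gpr(T)$ is independent of the choice of root and of the ordering of branches, we may analyse it on an arbitrarily rooted tree. To each vertex $v$ one attaches a vector $\mathbf{c}(v)=(c_1(v),\dots,c_k(v))$, where $c_j(v)$ is the minimum size of a set $S\subseteq V(T_v)$ that induces a perfect matching and dominates $V(T_v)$, or $V(T_v)\setminus\{v\}$, subject to the $j$-th among finitely many boundary conditions at $v$ (whether $v\in S$; whether $v$ is already matched inside $T_v$; whether $v$ is already dominated inside $T_v$; whether some child of $v$ is still free to become a partner) --- precisely the situations distinguished in lines~5, 10 and~12 of the algorithm. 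The vector $\mathbf{c}(v)$ is obtained from the multiset $\{\mathbf{c}(u):u\ \text{a child of}\ v\}$ by a $(\min,+)$-recursion, and $\gpr(T)$ is a fixed partial minimum of $\mathbf{c}(r)$ at the root $r$.

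\emph{Step~2 (bounded spread $\Rightarrow$ bounded additive form).} I would next prove the routine ``patching'' lemma: there is an absolute constant $K$ with $\max_j c_j(v)-\min_j c_j(v)\le K$ for every vertex $v$, established by a standard induction on the tree structure using the explicit $(\min,+)$-recursion (equivalently: a near-optimal witness for one boundary condition becomes a feasible witness for any other after a bounded number of local modifications near $v$). Writing $m(v)=\min_j c_j(v)$, the \emph{state} $s(v):=\mathbf{c}(v)-m(v)\mathbf{1}$ then takes values in a finite set $\mathcal{Q}\subseteq\{0,1,\dots,K\}^{k}$, is a deterministic function of $T_v$, obeys a finite transition rule $s(v)=\Phi(\{s(u):u\ \text{child of}\ v\})$, and satisfies $m(v)=\sum_{u}m(u)+\tau(s(v),\{s(u)\})$ with $|\tau|\le K$. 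Unrolling this identity from the root gives $m(r)=\sum_{v\in V(T)}g(T_v)$, where $g(T_v):=\tau(s(v),\{s(u)\})$ is bounded by $K$ and depends only on $T_v$; and since $\gpr(T)$ differs from $m(r)$ by at most $K$,
\[
\gpr(T)=\sum_{v\in V(T)}g(T_v)+e(T),\qquad |g|\le K,\quad 0\le e(T)\le K.
\]
Thus $\gpr(\tcl_n)$ is a bounded additive functional of $\tcl_n$ up to an $O(1)$ term.

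\emph{Step~3 (the limit law and positivity of the mean).} Under the hypotheses $\Exp\xi=1$ and $0<\mathrm{Var}\,\xi<\infty$, together with the standing assumptions $\Pr(\xi=0)>0$ and aperiodicity needed for $\tcl_n$ to be defined, the general central limit theorem for additive functionals of conditioned Galton--Watson trees with bounded toll function applies to $\sum_v g(T_v)$ and supplies constants $\mpr,\spr\ge 0$ with $\Exp\sum_v g((\tcl_n)_v)=\mpr n+O(1)$, $\mathrm{Var}\sum_v g((\tcl_n)_v)=\spr^2 n+O(1)$, and $\big(\sum_v g((\tcl_n)_v)-\mpr n\big)/\sqrt n\dto\mathcal N(0,\spr^2)$; the bounded term $e(\tcl_n)$ changes none of these, so $\Exp\gpr(\tcl_n)=\mpr n+O(1)=\mpr n+o(\sqrt n)$. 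That $\mpr>0$ is elementary: in every PD-set each support vertex occurs (a leaf is dominated only by its unique neighbour, and an in-$S$ leaf forces that neighbour into $S$ as its partner), so $\gpr(T)$ is at least the number of support vertices of $T$; and $\tcl_n$ has $\Theta(n)$ support vertices in expectation, a positive fraction of its vertices being leaves. Hence $\mpr\ge\liminf_n n^{-1}\Exp(\text{number of support vertices of }\tcl_n)>0$.

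\emph{Step~4 (the main obstacle: $\spr>0$).} What remains, and where I expect the genuine difficulty to lie, is excluding $\spr=0$, the degenerate case in which $\gpr(\tcl_n)$ concentrates around $\mpr n$ with no Gaussian limit on the scale $\sqrt n$. The general theorem comes with an explicit degeneracy criterion --- in effect $\spr^2=0$ if and only if $\sum_v g(T_v)$ agrees, to within $O(1)$, with a fixed multiple of $|T|$ over all trees $T$. I would refute this for $\gpr$ by using that $\gpr$ restricted to a fringe is genuinely nonlinear --- already on paths it is not affine in the order, since $\gpr(P_m)=2\lceil m/4\rceil$ exhibits a period-$4$ jump pattern --- so that one can exhibit small rooted trees of equal order whose occurrence as fringe subtrees forces different local contributions; because the fringe-subtree counts in $\tcl_n$ themselves fluctuate on the scale $\sqrt n$ (another facet of the same theory), this ``unbalancedness'' of $g$ makes the quadratic form computing $\spr^2$ strictly positive. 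With $\spr>0$ in hand, Step~3 yields $\big(\gpr(\tcl_n)-\mpr n\big)/(\spr\sqrt n)\dto\mathcal N(0,1)$, completing the proof. Everything outside Step~4 is a translation of Theorem~\ref{tree:alg} into tree-automaton form plus an appeal to known results; pinning down the precise non-degeneracy condition and matching it to an explicit family of local perturbations is the part that requires real work.
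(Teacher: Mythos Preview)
Your overall strategy coincides with the paper's: both reduce $\gpr$ to a bounded additive functional via a finite-state recursion read off the algorithm, then invoke a CLT for such functionals on conditioned Galton--Watson trees. Where you work with an abstract $(\min,+)$-automaton and a generic bounded toll $g$, the paper makes this concrete with a four-state labelling $\{B,F,R,P\}$ of vertices (each label determined by the multiset of children's labels), proves $\gpr(T)=2\Phi(T)+O(1)$ with $\Phi(T)$ the number of $R$-vertices, and applies the CLT of \cite{Naina} (restated as Theorem~\ref{thm:normality}).

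There is, however, one place where you underestimate the work required. In Step~3 you assert that ``the general central limit theorem for additive functionals \ldots\ with bounded toll function applies'' and move on. But the CLT actually available under merely $0<\mathrm{Var}(\xi)<\infty$ (Theorem~\ref{thm:normality}) carries hypotheses \eqref{eq:thm1-2}--\eqref{eq:thm1-3}: the toll must be well approximated by its value on the depth-$M$ truncation of the fringe subtree, with error decaying suitably in $M$. This is \emph{not} automatic from the finite-state structure, since a change at depth $M$ can in principle flip every state along the path to the root. The paper discharges this (Lemmas~\ref{lem:t-bound} and~\ref{lem:ht-bound}) by exhibiting a fixed height-$3$ tree $\tau_0$ with the synchronising property that any tree whose top three levels equal $\tau_0$ has its root label fixed regardless of what lies below, and then showing that the probability of avoiding such a pattern decays exponentially in $M$. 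Your automaton framework needs the same ingredient; it is the analogue, for this particular recursion, of showing the automaton has a synchronising word. (Your claim that the mean error is $O(1)$ is also stronger than what Theorem~\ref{thm:normality} delivers, namely $o(\sqrt n)$.)

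For Step~4 the paper's argument is close to what you sketch, but with one extra constraint you do not mention: the two trees $\tau_1,\tau_2$ of equal order and unequal $\gpr$ must also carry the \emph{same root state}, so that swapping one for the other as a fringe subtree leaves the contribution of the rest of the tree unchanged. The paper secures this by grafting two small gadgets (with $\gpr$ equal to $4$ and $6$) onto a leaf of the synchronising tree $\tau_0$, forcing both roots to be labelled $P$; your path example would need to be wrapped in the same way.
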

Table~\ref{table:mu} shows the numerical values of the constant $\mpr$ for the three combinatorial tree models mentioned above.
\begin{table}[htb]
\centering
\begin{tabular}{|c|c|c|c|c|}
\hline
 & $\xi$ & $\mpr$  \\
\hline
Binary trees & $\mathrm{Bin}(2,1/2)$  & 0.5255  \\
\hline
Plane trees & $\mathrm{Geo}(1/2)$ & 0.4747  \\
\hline
Labelled trees & $\mathrm{Pois}(1)$ & 0.5177 \\
\hline
\end{tabular}
\caption{Numerical values for $\mpr$.}\label{table:mu}
\end{table}

The proof of Theorem~\ref{thm:clt} is carried out in several steps, which make use of the algorithm described in the previous section. When we run the algorithm with an input tree $T$, we label each vertex as $B$, $F$, $R$, or $P$ according to the following rules:
\begin{itemize}
    \item A vertex $i$ is labelled $B$ if, when considering $i$ (in the algorithm),  both $\InPD{}[i]$ and $\PDchild{}[i]$  are false.
    \item A vertex $i$ is labelled $F$ if, when considering $i$, $\InPD{}[i]=\falseX$ and $\PDchild{}[i]=\trueX$.
    \item A vertex $i$ is labelled $R$ if, when considering $i$, $\InPD{}[i]=\trueX$ and none of its children is labelled $R$.
    \item A vertex $i$ is labelled $P$ if, when considering $i$, $\InPD{}[i]=\trueX$ and at least one of its children is labelled $R$.

\end{itemize}
We can also incorporate this vertex-labelling into the algorithm by inserting the following lines between Lines 4 and 5, and also between Lines 14 and 15, of Algorithm TREE PAIRED DOMINATION:
\begin{unnumbered}{} \textbf{} \\
\textbf{Additional Code:}

 \hspace{0.8cm} \textbf{If} {\rm( $\InPD{}[i]=\falseX$ \textbf{and} $\PDchild{}[i]=\falseX$ \rm) }
\item[] \hspace*{2 cm} \textbf{Then}  \{$\ell[i]=B$\}

\hspace{0.8cm} \textbf{If} {\rm( $\InPD{}[i]=\falseX$ \textbf{and} $\PDchild{}[i]=\trueX$ \rm) }
\item[] \hspace*{2 cm} \textbf{Then}  \{$\ell[i]=F$\}

\hspace{0.8cm} \textbf{Elseif} {\rm( \text{for every child} $j$ \text{of} $i$, $\ell(j)\neq R$ \rm) }
\item[] \hspace*{2 cm} \textbf{Then}  \{$\ell[i]=R$\}

\hspace{0.8cm} \textbf{Else}
\item[] \hspace*{2 cm} \textbf{Then}  \{$\ell[i]=P$\}

\end{unnumbered}

Since we have introduced new labels for the vertices, we shall refer to the numerical labels that we considered previously as ranks, e.g., the root has rank $1$. At the end of the modified algorithm, each vertex of $T$ is assigned a unique label: $B$, $F$, $R$, or $P$ (see the example in Figure~\ref{fig:root_label} for illustration). We also remark that no two vertices labelled $R$ are adjacent to each other. The next lemma shows that the label of each vertex can be determined solely by the labels of its children.
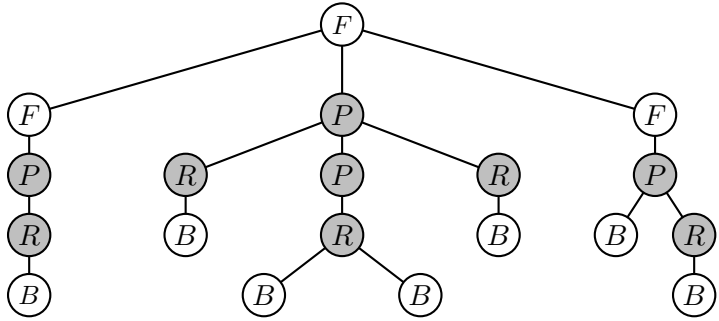
\begin{figure}[htb]
\begin{center}
\begin{tikzpicture}[scale=.8,style=thick,x=1.3cm,y=1cm]
\def\vr{10pt} 
\path (0,1) coordinate (v10);
\path (0,2) coordinate (v5);
\path (0,3) coordinate (v2);
\path (2,1) coordinate (v11);
\path (2,2) coordinate (v6);
\path (3,0) coordinate (v14);
\path (4,1) coordinate (v12);
\path (4,2) coordinate (v7);
\path (4,3) coordinate (v3);
\path (4,3.2) coordinate (v3p);
\path (4,4.5) coordinate (v1);
\path (5,0) coordinate (v15);
\path (6,1) coordinate (v13);
\path (6,2) coordinate (v8);
\path (8,2) coordinate (v9);
\path (8,3) coordinate (v4);
\path (7.5,1) coordinate (v16);
\path (8.5,1) coordinate (v17);
\path (8.5,0) coordinate (v18);
\path (0,0) coordinate (v19);

\draw (v19)--(v10)--(v5)--(v2)--(v1)--(v4)--(v9)--(v16);
\draw (v9)--(v17)--(v18);
\draw (v1)--(v3)--(v7)--(v12)--(v14);
\draw (v12)--(v15);
\draw (v11)--(v6)--(v3)--(v8)--(v13);
\draw (v19) [fill=white] circle (\vr) node  {{\small $B$}};
\draw (v1) [fill=white] circle (\vr) node  {{\small $F$}};
\draw (v2) [fill=white] circle (\vr) node  {$F$};
\draw (v3) [fill=gray!50] circle (\vr) node  {$P$};
\draw (v4) [fill=white] circle (\vr) node  {$F$};
\draw (v5) [fill=gray!50] circle (\vr) node  {$P$};
\draw (v6) [fill=gray!50] circle (\vr) node  {$R$};
\draw (v7) [fill=gray!50] circle (\vr) node  {$P$};
\draw (v8) [fill=gray!50] circle (\vr) node  {$R$};
\draw (v9) [fill=gray!50] circle (\vr) node  {$P$};
\draw (v10) [fill=gray!50] circle (\vr)  node  {$R$};
\draw (v11) [fill=white] circle (\vr) node  {$B$};
\draw (v12) [fill=gray!50] circle (\vr) node  {$R$};
\draw (v13) [fill=white] circle (\vr) node  {$B$};
\draw (v14) [fill=white] circle (\vr)  node  {$B$};
\draw (v15) [fill=white] circle (\vr) node  {$B$};
\draw (v16) [fill=white] circle (\vr) node  {$B$};
\draw (v17) [fill=gray!50] circle (\vr) node  {$R$};
\draw (v18) [fill=white] circle (\vr) node  {$B$};

\end{tikzpicture}
\end{center}
\caption{A tree with labelled vertices}
\label{fig:root_label}
\end{figure}
\begin{lemma}\label{lem:pd-additive1}
    If the vertices of a tree $T$ are labelled according to the rules above, then for every $v$ in $T$ we have
    \begin{itemize}
        \item $v$ is labelled $B$ if and only if $v$ is a leaf or every child of $v$ is labelled $F$,
        \item $v$ is labelled $F$ if and only if $v$ has at least one child labelled $P$ and all the other children of $v$ are labelled $F$,

        \item  $v$ is labelled $R$ if and only if $v$ has at least one child labelled $B$ and all the other children of $v$ are labelled $P$ or $F$,
        \item  $v$ is labelled $P$ if and only if $v$ has at least a child labelled $R$.
    \end{itemize}
\end{lemma}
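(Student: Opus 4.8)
The plan is to prove the lemma by induction on the rank of $v$, sweeping through the vertices in the order $v=n,n-1,\dots,1$ used by the algorithm, so that when $v$ is reached every child of $v$ (carrying a larger rank) has already been processed and labelled. The label $\ell[v]$ is read off by the additional code from the pair $(\InPD[v],\PDchild[v])$ at the instant $v$ is considered, together with the labels of the children of $v$ in the $R$-versus-$P$ case; so the whole task is to show that these two bits are themselves forced by the labels of the children of $v$. To this end I would carry, as the inductive invariant, the statement that \emph{just before $v$ is considered},
\begin{itemize}
\item $\InPD[v]=\trueX$ if and only if $v$ has a child labelled $B$ or a child labelled $R$, and
\item $\PDchild[v]=\trueX$ if and only if $v$ has a child labelled $R$ or a child labelled $P$
\end{itemize}
(with a harmless modification at the root, where Case~1 cannot fire on the children of the root and $\PDchild[1]$ is instead set from Line~14; the root is checked separately along the same lines, with Lines~15--18 providing its partner). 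Granting the invariant and recalling how the additional code uses $(\InPD[v],\PDchild[v])$, the four bullets of the lemma drop out at once: a leaf, or a vertex all of whose children are labelled $F$, has $\InPD[v]=\PDchild[v]=\falseX$ and is labelled $B$; a vertex with a $P$-child and otherwise only $F$-children has $\InPD[v]=\falseX$ and $\PDchild[v]=\trueX$ and is labelled $F$; a vertex with a $B$-child but no $R$-child has $\InPD[v]=\trueX$ and, having no $R$-child, is labelled $R$; and a vertex with an $R$-child has $\InPD[v]=\trueX$ and is labelled $P$. These four possibilities are mutually exclusive and exhaust all configurations of the children's labels, which also re-establishes the remark that no two $R$-vertices are adjacent.

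The base case is a leaf $v$: it has no descendant, the only line of an earlier iteration that could write to $\InPD[v]$ is Line~13 (with $v$ in the role of $i_1$), and no earlier line can write to $\PDchild[v]$ (Lines~8, 9, and~14 all target a vertex that has descendants). One checks that whenever Case~2 fires on a vertex $i$ one has $\InPD[i]=\falseX$ at that moment, so the index $i_1$ it selects is at least $i$ and cannot be a strictly smaller sibling of $i$; hence $\InPD[v]=\PDchild[v]=\falseX$ when $v$ is considered and $v$ is labelled $B$, as claimed. For the inductive step I would track how $\InPD[v]$ and $\PDchild[v]$ evolve from the start of the iteration on the largest-ranked child of $v$ to the start of the iteration on $v$, using that (before $v$ is considered) these two entries can only be written by iterations on descendants of $v$ within distance three, and classifying each write. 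The key facts to extract are: whenever Case~1 or Case~2 fires on a vertex $i$, that vertex is labelled $B$; $\InPD[v]$ is turned true either when the first child of $v$ with $\PDchild=\falseX$ reached in the sweep is a $B$-child (Lines~6 or~11), or when a grandchild of $v$ triggers Case~1, which is exactly the situation in which $v$ acquires an $R$-child (Line~7); and $\PDchild[v]$ is turned true by Lines~8, 9, or~14 exactly in the situations that place a child of $v$ into the PD-set, i.e.\ make a child of $v$ an $R$- or a $P$-vertex. Assembling these writes and invoking the induction hypothesis to read off the labels of the children of $v$ (and, where needed, of their children) gives the invariant for $v$.

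The step I expect to be the main obstacle is this bookkeeping of the "side effects": one firing of Case~1 or Case~2 writes simultaneously to the parent, the grandparent, and the great-grandparent of the triggering vertex, so the state of $\InPD[v]$ and $\PDchild[v]$ when $v$ is reached depends on the precise interleaving of the processing of the subtrees rooted at the children of $v$, and one must rule out spurious interference — chiefly a vertex being pulled into the PD-set by Line~13 as a mere "filler" partner in a way that would clash with its already-assigned label, or Case~1 or Case~2 firing on a vertex already in the PD-set. The clean way I see to close this is to carry, simultaneously with the two displayed invariants, the auxiliary statement that whenever a vertex $w$ is placed in the PD-set strictly before its own consideration, the very same firing also sets $\InPD[\parent[w]]=\trueX$ or $\PDchild[w]=\trueX$. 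This "no orphaned insertion" fact is precisely what forbids Case~1 or Case~2 from firing on an already-inserted vertex, and it is what makes the case analysis of the inductive step close.
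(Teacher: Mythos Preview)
Your plan is a legitimate alternative to the paper's argument, which instead runs a proof by contradiction: label every vertex twice (once by the algorithm's rules, once by the children-based rules of the lemma), take the vertex of \emph{highest} rank where the two labellings disagree, and then, case-splitting on the lemma-label of that vertex, show that the algorithm must assign the same label. The paper's route avoids carrying explicit invariants on $\InPD[v]$ and $\PDchild[v]$ and is correspondingly shorter, whereas your route is more mechanistic and makes the interaction between the boolean arrays and the labels fully explicit.

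However, your second displayed invariant is false as stated. Consider the tree with $\parent=[0,1,1,2,3]$ (root $1$ with children $2,3$; $2$ has leaf child $4$; $3$ has leaf child $5$). When $i=5$ is processed, Case~1 fires and sets $\InPD[1]=\trueX$. When $i=4$ is processed, Case~2 fires (since $\InPD[1]=\trueX$): Line~11 sets $\InPD[2]=\trueX$, Line~13 selects $i_1=4$ and sets $\InPD[4]=\trueX$, and Line~14 sets $\PDchild[2]=\trueX$. So when $v=2$ is considered, $\PDchild[2]=\trueX$, yet the only child of $2$ is the leaf $4$, labelled $B$. Your claim that Line~14 ``places a child of $v$ into the PD-set, i.e.\ makes a child of $v$ an $R$- or a $P$-vertex'' is precisely where the argument breaks: the vertex $i_1$ that Line~13 inserts has \emph{already} been labelled (as $B$ or $F$), so putting it into the PD-set after the fact does not promote it to $R$ or $P$.

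The fix is easy and fits your framework. Whenever Line~14 sets $\PDchild[v]$, Line~11 in the very same firing also sets $\InPD[v]=\trueX$; and Line~8 always comes paired with Line~7 setting $\InPD[v]=\trueX$. Hence the only way to get $\PDchild[v]=\trueX$ while keeping $\InPD[v]=\falseX$ is via Line~9 (a great-grandchild of $v$ firing Case~1), and in that situation the corresponding child of $v$ is indeed eventually labelled $R$ or $P$. So the correct invariant to carry is the conditional one:
\[
\bigl(\InPD[v]=\falseX \text{ and } \PDchild[v]=\trueX\bigr) \iff v \text{ has an $R$- or $P$-child but no $B$- or $R$-child}.
\]
Since the value of $\PDchild[v]$ is only consulted by the additional code when $\InPD[v]=\falseX$ (to distinguish $B$ from $F$), this weaker invariant is exactly what you need, and with it your derivation of the four bullets goes through unchanged.
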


\begin{proof}
    For a given rooted tree $T$, we label the vertices of $T$ in two ways: according to the original rules and according to the rules in the lemma. Assume, by contradiction, that these give two different labellings of $T$. Let $v$ be the vertex with the highest rank (farthest from the root) such that the labelling given by the original definition differs from the labelling given in the statement of the lemma. Note that $v$ cannot be a leaf, since leaves are labelled $B$ in both labellings. Thus, we may assume from now on that $v$ is an interior vertex. There are then different cases to consider:

    \begin{itemize}
    \item Assume that the statement of the lemma gives $v$ the label $B$. This means that all children of $v$ are labelled $F$. Since both statements agree on the labelling of the children of $v$, the algorithm would have passed through each of these children without changing any of the vertex states. Additionally, no grandchildren of $v$ are labelled $B$, since this would change the $\InPD$ state of a child of $v$ to true. Thus, by the time $v$ is considered by the algorithm, both $\PDchild{}[v]$ and $\InPD{}[v]$ remain false. Therefore, the definition would also give $v$ the label $B$.
    \item Assume that the statement of the lemma gives $v$ the label $F$. This means that at least one of its children is labelled $P$, and the rest are labelled $F$ (by both rules). This implies that when $v$ is considered by the algorithm, $\PDchild{}[v] = \trueX$. It remains to show that $\InPD{}[v]$ is false. Initially, $\InPD{}[v]$ is set to false. If $\InPD{}[v]$ has been changed to true before $v$ has been considered by the algorithm, then $v$ must have a grandchild labelled $B$. Let $w$ be such a $B$-grandchild of $v$ with the highest rank. Then, the parent of $w$ will be labelled $R$ (by both rules). This is a contradiction since no child of $v$ is labelled~$R$.
    \item Assume that the statement of the lemma gives $v$ the label $R$. Then, $v$ has some children labelled $B$ and the rest are labelled $P$ or $F$ (by both rules). It is clear that no child is labelled $R$. Let $w$ be the $B$-child of $v$ with the highest rank. By the time the algorithm considers $w$, both $v$ and its parent have $\InPD$ and $\PDchild$ set to false. So, after considering $w$, $\InPD{}[v] = \trueX$. Hence, $v$ will also be given the label $R$ by the definition.
    \item Finally, assume that the statement of the lemma gives $v$ the label $P$. Then, $v$ must have a child labelled $R$, which in turn must have a child labelled $B$. Let $w_1$ be the $R$-child of $v$ with the highest rank, and let $w_2$ be the $B$-child of $w_1$ with the highest rank. When $w_2$ is considered by the algorithm, both $\InPD{}[w_1]$ and $\InPD{}[v]$ are set to true. Therefore, $v$ is also labelled $P$ by the definition.
\end{itemize}
Hence, both the original rules and the rules in the lemma give the same label to $v$, which is a contradiction.
\end{proof}

It is useful to partition the class of rooted plane trees $\mathbb{T}$ into four subclasses: $\mathbb{T}_B$, $\mathbb{T}_F$, $\mathbb{T}_R$, and $\mathbb{T}_P$, according to the label of the root. For instance, a tree in $\mathbb{T}_B$ has its root vertex labelled $B$. Let $\Phi(T)$ be the number of vertices labelled $R$ in $T$ after running the algorithm with the vertex labelling described above. By convention, we assume that a tree consisting of a single vertex belongs to $\mathbb{T}_B$; hence, $\Phi(\bullet) = 0$.

\begin{lemma}\label{lem:pd-additive}
For every tree $T$ of order at least $2$, we have
\begin{equation}\label{eq:additivity}
\Phi(T)=\sum_{k=1}^{d}\Phi(T_k)+\varphi(T)
\end{equation}
where $T_1, T_2, \ldots,T_d$ are the root-branches of $T$, and
\[
\varphi(T)=
\begin{cases}
1 \ \text{ if } \ T\in \mathbb{T}_R\\
0 \ \text{ otherwise}.
\end{cases}
\]
Moreover, we have
\[
\gpr(T)=2\Phi(T)+
\begin{cases}
2 \ \text{ if } \ T\in \mathbb{T}_B\\
0 \ \text{ otherwise}.
\end{cases}
\]
\end{lemma}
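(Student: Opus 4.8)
The additivity~\eqref{eq:additivity} is essentially immediate from Lemma~\ref{lem:pd-additive1}. That lemma presents the label of a vertex as a function of the labels of its children only, and this recursion bottoms out at the leaves, which are always labelled $B$; hence the label a vertex receives depends only on the rooted subtree below it and not on the rest of $T$. In particular, for each root-branch $T_k$ of $T$, every vertex of $T_k$ — the root of $T_k$ included — gets the same label when $T$ is labelled as it does when $T_k$ is labelled in its own right. Thus the $R$-labelled vertices of $T$ are the disjoint union of the $R$-labelled vertices of $T_1,\dots,T_d$ together with the root of $T$ exactly when the root of $T$ is itself labelled $R$, and counting gives $\Phi(T)=\sum_{k=1}^{d}\Phi(T_k)+\varphi(T)$.

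For the formula for $\gpr(T)$ I would begin with Theorem~\ref{tree:alg}: the output set $S$ is a $\gpr$-set, so $\gpr(T)=|S|$, and by construction $S$ carries a perfect matching $M$, formed by the pairs created in lines~6--7, the pairs created in lines~11 and~13, and (when it occurs) the pair created in lines~16--17. Each of these pairs is an edge of $T$ between a vertex and one of its children, so $|S|=2|M|$ and it suffices to show $|M|=\Phi(T)+[\,T\in\mathbb{T}_B\,]$. Since no two $R$-labelled vertices are adjacent, each pair of $M$ carries at most one $R$-vertex; and one checks from lines~15--18 that the pair $\{1,2\}$ is created exactly when $T\in\mathbb{T}_B$, in which case the root is $B$ and, by Lemma~\ref{lem:pd-additive1}, vertex~$2$ is labelled $F$, so that pair carries no $R$-vertex. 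The remaining content is the claim that for every pair created inside the loop — say while processing vertex $i$, so a pair $\{p_i,g_i\}$ from the line-5 branch or a pair $\{p_i,i_1\}$ from the line-10 branch — the parent $p_i$ of $i$ is labelled $R$. Granting this, a line-5 pair has $g_i$ labelled $P$ (it has the $R$-child $p_i$) and a line-10 pair has $i_1$ not labelled $R$ (it is a child of the $R$-vertex $p_i$), so each loop pair contains exactly one $R$-vertex, namely $p_i$; distinct loop pairs have distinct $p_i$'s (the guards of lines~5 and~10 let at most one of a family of siblings trigger a pair), and every $R$-vertex lies in a pair but not in $\{1,2\}$. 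Hence the loop pairs are in bijection with the $R$-labelled vertices, $|M|=\Phi(T)+[\,T\in\mathbb{T}_B\,]$, and $\gpr(T)=2\Phi(T)+2[\,T\in\mathbb{T}_B\,]$, as required.

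Establishing that $p_i$ is labelled $R$ for every loop pair is the technical heart. I would prove it together with a suitable description of the state of the algorithm, by induction following the main loop (vertices processed from $n$ down to~$2$): the invariant records which vertices currently lie in $S$, how each of them is matched, and what $B/F/R/P$ label each already-labelled vertex has received. When the line-5 branch fires at $i$, its guard forces $\InPD[p_i]=\InPD[g_i]=\falseX$, so $p_i$ is added to $S$ here; one must then show that no child of $p_i$ ever enters $S$ with label $R$, which by the rule of Lemma~\ref{lem:pd-additive1} makes $p_i$ labelled $R$, and the line-10 branch is handled the same way. The point to be careful about — and what I expect to be the main obstacle — is that a sibling of $i$, or a sibling of $p_i$, may still be inserted into $S$ after vertex~$i$ has been processed (when its own subtree triggers line~5 or line~10 lower down), and one has to verify that whenever this happens the sibling is inserted matched to one of its children and hence is labelled $P$, not $R$; the closure step is cleanest when phrased as ruling out a matched pair whose two endpoints are both labelled $P$, since Lemma~\ref{lem:pd-additive1} would then produce an infinite descending chain of $P$-labelled vertices. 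The breadth-first numbering of the vertices, which forces the descendants of such a sibling to be processed before $i$, is exactly what keeps the relevant guards from interfering and makes the case analysis terminate. (One could instead dispense with the matching and prove four branch recursions for $\gpr$, one per root label, and combine them with~\eqref{eq:additivity} by induction on $|V(T)|$; but checking those recursions requires essentially the same bookkeeping.)
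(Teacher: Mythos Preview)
Your treatment of the additivity formula~\eqref{eq:additivity} is fine and matches the paper's (which simply calls it obvious).

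For the second formula you take a genuinely different route from the paper. The paper does \emph{not} invoke Theorem~\ref{tree:alg}; instead it argues two inequalities. First $\gpr(T)\ge 2\Phi(T)$, because the $R$-vertices are independent and all belong to the algorithm's $\gpr$-set, so each must be matched to a non-$R$ partner. Second, the paper constructs by hand a PD-set $S^*$ directly from the labelling (pair each $P$ with one of its $R$-children, pair each remaining $R$ with one of its $B$-children, and add the root together with a child when the root is labelled $B$), checks via Lemma~\ref{lem:pd-additive1} that $S^*$ dominates $T$, and reads off $|S^*|=2\Phi(T)$ or $2\Phi(T)+2$. This yields the matching upper bound without ever tracking the algorithm's execution.

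Your approach, by contrast, uses Theorem~\ref{tree:alg} to identify $\gpr(T)=|S|$ and then argues a bijection between the loop-created pairs and the $R$-vertices. This is correct: the key invariant ``whenever line~5 or~10 fires at $i$, the vertex $p_i$ receives label $R$'' does go through by the induction you sketch, and the supporting facts (lines~15--18 fire iff $T\in\mathbb{T}_B$; $i_1\ge i$ so $\ell(i_1)\in\{B,F\}$; siblings of $i$ that enter $S$ do so via line~7 and are hence $P$) all check out. The trade-off is that the paper's argument is self-contained---it needs only Lemma~\ref{lem:pd-additive1} and the existence of the labels, not the correctness of the algorithm nor any bookkeeping about which line set which boolean---whereas your argument purchases a single equality at the cost of that case analysis. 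Either approach is acceptable, but if you want the cleaner write-up, construct the PD-set from the labels as the paper does rather than dissecting the algorithm's matching.
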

\begin{proof}
Since $\Phi(T)$ counts the number of $R$-vertices in $T$ after running the algorithm, the first part of the lemma is obvious, as we only need to sum the number of $R$-vertices from all the branches and add the root of $T$ if it is labelled $R$.

The second part of the lemma is proved in a few steps. Let us first prove the following lower bound:
\begin{equation}\label{eq:lower}
\gpr(T) \geq 2\Phi(T).
\end{equation}

To see this, notice that the $\gpr$-set produced by the algorithm contains all the $R$-vertices. In addition, the $R$-set is independent by definition of the label $R$. Each of these $R$-vertices is paired with another vertex that is not labelled $R$ in the $\gpr$-set, so the claim is proved.

Next, let us denote by $S$ the set of vertices labelled $R$ or $P$. We claim that $S$ dominates $T$, except possibly the root. To see this, assume that there is a vertex $v$ different from the root that is not dominated by $S$. This implies that $\ell(v) \notin \{R, P\}$ and no neighbor of $v$ is labelled $P$ or $R$. In particular, the labels of the children of $v$ are either $B$ or $F$. Thus, we have the following possible cases:
\begin{itemize}
    \item If $v$ has a $B$-child, then $\InPD[v] = \trueX$. So, by definition, $\ell(v)$ must either be $R$ or $P$, which is a contradiction.
    \item If all children of $v$ are labelled $F$, then $\ell(v) = B$. By Lemma~\ref{lem:pd-additive1}, the parent of $v$ cannot be labelled $B$ or $F$. Therefore, the parent of $v$ has to be labelled $P$ or $R$, which is again a contradiction.
\end{itemize}

We can then conclude that the set $S$ of $R$ and $P$ vertices dominates $T$, except possibly the root. Let us now construct a PD-set from $S$. Consider the set $S^*$ in the following way: we go from the top to the bottom of $T$, and at each $P$-vertex, we pair it with one of its $R$-children, which we know exists by Lemma~\ref{lem:pd-additive1}. Then, for the remaining $R$-vertices which have not yet been paired, pair each with one of its $B$-children (which we also know exists), and add this selected $B$-child to $S^*$.

We already know that $S^*$ dominates $T$, except possibly the root, which we will denote by $r$. So, we must separately consider the cases where the root is dominated or not. Observe first that the root is not dominated by $S^*$ if and only if the root is labelled $B$. To see this, if the root $r$ is not dominated by $S^*$, then none of its children is labelled $R$ or $P$. Moreover, the root cannot have a child labelled $B$ either, since otherwise, it would be labelled $R$ according to Lemma~\ref{lem:pd-additive1}. This means that all the children are labelled $F$, which implies that $\ell(r) = B$. Conversely, if $\ell(r) = B$, then all its children are labelled $F$ by Lemma~\ref{lem:pd-additive1}, so they cannot be in $S^*$. Thus, $r$ is not dominated by $S^*$. Now consider the following two cases:
\begin{itemize}
    \item If the root $r$ is dominated by $S^*$ (equivalently, $\ell(r) \neq B$), then $S^*$ is a PD-set of $T$. Therefore,
    \[
    \gpr(T) \leq |S^*| = 2\Phi(T).
    \]
    This is in fact an equality by Equation~\eqref{eq:lower}. This proves the case $T \notin \mathbb{T}_B$.
    \item If the root is not dominated by $S^*$ (equivalently, $\ell(r) = B$), then clearly none of the children of the root is in $S^*$. Therefore, we can add the root and one of its children to $S^*$ to obtain a PD-set of $T$. We obtain
    \[
    \gpr(T) \leq 2\Phi(T) + 2.
    \]
    We claim that we also have
    \[
    \gpr(T) \geq 2\Phi(T) + 2.
    \]
Indeed, suppose to the contrary that this is not the case, i.e., $\gpr(T) = 2\Phi(T)$. Then, let us take the $\gpr$-set from the algorithm, with an appropriate pairing of the elements. In this case, since the $R$-vertices are independent, each pair of this $\gpr$-set must have one $R$-vertex. Now, since it is a dominating set, it must contain a child of the root, say $w$. This means that $\ell(w) = R$ or $w$ has a child labelled $R$. In either case, $\ell(w)$ cannot be $F$, which is in contradiction with $\ell(r) = B$ (according to Lemma\ref{lem:pd-additive1}).
\end{itemize}
This completes the proof of the lemma.
\end{proof}

Tree parameters that satisfy a recursion like the one in Equation~\eqref{eq:additivity} are called additive. In our case, the function $\varphi$ is referred to as the associated toll function of the additive parameter~$\Phi$. There are many results in the literature concerning the limiting distributions of additive parameters of random rooted trees; see, for example, \cite{ janson2016asymptotic, Naina, wagner2015central} and the references therein. The result relevant to our case can be found in \cite[Theorem 1]{Naina}. For completeness, we will state this theorem in full. But before we do so, let us recall some background and notation.

We consider $F$ to be an additive tree functional with a toll function $f$, which means that for every rooted tree $T$, we have
\[
F(T)=\sum_{k=1}^dF(T_k)+f(T).
\]
where $T_1, T_2, \ldots,T_d$ are the root-branches of $T$. We also assume that the toll function $f$ is bounded.

For a tree $T$ and a positive integer $M$, let $T^{(M)}$ denote the tree consisting of all vertices whose distance to the root is at most $M$. According to Kesten \cite{kesten1986subdiffusive}, we define the infinite size-biased Galton--Watson tree $\hat \tcl$ in the following way: Starting from an infinite path beginning at the root, we add to each vertex on the path and each side of the path a random number of additional branches, each of which is an independent copy of $\tcl$.  The random infinite tree $\hat \tcl$ plays an important role in the study of conditioned Galton--Watson trees because it is known that the truncated random tree $\tcl_n^{(M)}$ converges in distribution to $\hat \tcl^{(M)}$ for every fixed $M$, more details on this can be found in \cite{J12}. The distribution of $\hat \tcl^{(M)}$ is given by the following formula:
\[
\Pr(\hat \tcl^{(M)} = T) = w_M(T) \Pr(\tcl^{(M)} = T),
\]
where $w_M(T)$ denotes the number of vertices at distance $M$ from the root in $T$, see~\cite[Equation~(5.11)]{J12}. The above equation defines a probability distribution if $\mathbb{E}(\xi) = 1$ since in this case we can show that $\mathbb{E}(w_M(\tcl))=1$.


\begin{theorem}[\cite{Naina}]\label{thm:normality}
	Let $\tcl_n$ be a conditioned Galton--Watson tree of order $n$ with offspring distribution $\xi$, where $\xi$ satisfies $\Exp (\xi)= 1$ and $0<\sigma^2:=\mathrm{Var} (\xi) <\infty$. Consider an additive parameter $F$ with a bounded toll function $f$.  Furthermore, assume that there exists a sequence $(p_M)_{M\geq 1}$ of positive numbers with  $p_M\to 0$ as $M\to\infty$ such that
	
	\begin{itemize}
		\item for every $M, N \in \{1, 2, \dots\}$, such that $N \ge M$
		\begin{equation}\label{eq:thm1-2}
		 	\, \Exp \left|  f(\hat \tcl^{(M)}) -\Exp\left(f(\hat{\tcl}^{(N)})\,  | \, \hat{\mathcal{T}}^{(M)}\right)\right|\leq p_M,
		\end{equation}
		and
		\item there is a sequence of positive integers $(M_n)_{n\geq 1}$ such that for large enough $n$,
		\begin{equation}\label{eq:thm1-3}
		\Exp \left|f(\mathcal{T}_n)-f\left(\mathcal{T}_n ^{(M_n)}\right)\right|\leq p_{M_n}.
		\end{equation}
	\end{itemize}
	If $a_n:=n^{-1/2}(np_{M_n}+M_n^2)$ satisfies
	\begin{equation}\label{eq:thm1-4}
	\lim_{n\to\infty} a_n= 0, \, \text{ and } \, \sum_{n=1}^{\infty}\frac{a_n}{n}<\infty,
	\end{equation}
	then
	\begin{equation}\label{eq:normality}
	\frac{F(\mathcal{T}_n) - \mu n}{\sqrt n} \dto \mathcal{N}(0,\gamma^2)
	\end{equation}
	where $\mu=\Exp f(\mathcal{T})$, for some $0 \le \gamma < \infty$. Moreover, we have
 \begin{equation}
  \Exp ( F(\tcl_n) ) =\mu n+o(\sqrt{n} \,), \text{\ and\ } \mathrm{Var} F(\tcl_n) =\gamma^2n+o(n).
 \end{equation}
\end{theorem}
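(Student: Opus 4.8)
The plan is to analyse $F$ through its fringe-subtree expansion, treating the mean and the limit law separately, and to reduce the general bounded toll $f$ to finite-range tolls for which asymptotic normality is classical. Unrolling the additivity relation gives $F(T)=\sum_{v\in T}f(T_v)$, where $T_v$ is the fringe subtree rooted at $v$. Note at the outset that, since the conclusion only asserts a limit with some $0\le\gamma<\infty$, no non-degeneracy argument is required; all the work lies in producing the Gaussian limit and in identifying the two constants with the stated error terms.

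For the mean I would use the standard fact that the fringe subtree at a uniformly chosen vertex of $\tcl_n$ converges in distribution to $\tcl$, so that $\Exp F(\tcl_n)=n\,\Exp f(T_V)$ with $T_V\dto\tcl$, giving the leading term $\mu n$ with $\mu=\Exp f(\tcl)$. To sharpen the error to $o(\sqrt n)$ I replace $f$ by its truncation $f(T^{(M_n)})$ on the top $M_n$ levels, bounding the replacement error in $L^1$ by condition~\eqref{eq:thm1-3}; the truncated mean is then evaluated through the local limit $\tcl_n^{(M)}\dto\hat\tcl^{(M)}$ and the identity $\Exp w_M(\tcl)=1$, which relates averages over $\hat\tcl^{(M)}$ back to $\tcl^{(M)}$. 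The rate built into~\eqref{eq:thm1-4} is precisely what forces the remaining discrepancy below $\sqrt n$.

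For the limit law the central device is truncation. For each $M$ set $f_M(T):=f(T^{(M)})$, the restriction of the toll to the top $M$ levels; this is a bounded toll depending only on $T^{(M)}$, it agrees with the truncation appearing in~\eqref{eq:thm1-3}, and condition~\eqref{eq:thm1-2} guarantees that refining the truncation alters $f_M$ by at most $p_M$ in $L^1$ on the size-biased tree, so that $f_M$ approximates $f$ in the relevant averaged sense as $M\to\infty$. The associated additive functional $F_M(T)=\sum_v f\bigl(T_v^{(M)}\bigr)$ is then a bounded, depth-$M$ additive functional. It therefore suffices to establish three facts: (i) for each fixed $M$, $\bigl(F_M(\tcl_n)-\Exp F_M(\tcl_n)\bigr)/\sqrt n\dto\mathcal{N}(0,\gamma_M^2)$; (ii) $\gamma_M^2\to\gamma^2$ for some finite $\gamma^2$; and (iii) $\mathrm{Var}\bigl(F(\tcl_n)-F_M(\tcl_n)\bigr)=o(n)$ uniformly as $M\to\infty$. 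Given these, a standard approximation theorem for weak convergence transfers the Gaussian law from $F_M(\tcl_n)$ to $F(\tcl_n)$, and the mean estimate $\Exp F(\tcl_n)=\mu n+o(\sqrt n)$ allows recentering by $\mu n$ at no cost, yielding~\eqref{eq:normality}.

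Step (i) is the finite-depth case: because $f_M$ reads only the top $M$ levels of each fringe subtree, the tolls at two vertices are uncorrelated unless one lies on the ancestral line of the other within distance $M$, and this tractable covariance structure makes $F_M(\tcl_n)$ asymptotically normal with variance of order $n$, provable by the method of moments on the depth-first exploration (or citable from the fringe-tree central limit theorem); the constant $\gamma_M^2$ is assembled from exactly these ancestral-line covariances, which is the structure the size-biased tree $\hat\tcl$ encodes and explains why the hypotheses are phrased through $\hat\tcl$. The main obstacle is steps (ii) and (iii): I must upgrade the first-moment control $p_M$ on the toll discrepancy into a \emph{second}-moment bound $\mathrm{Var}(F-F_M)=o(n)$ that is uniform in $M$, and simultaneously show the variances $\gamma_M^2$ form a Cauchy sequence. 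Here condition~\eqref{eq:thm1-2} bounds the per-vertex $L^1$ discrepancy between $f$ and $f_M$ at typical (size-biased) vertices, condition~\eqref{eq:thm1-3} transfers this control to the conditioned tree $\tcl_n$ itself, and the summability $\sum_n a_n/n<\infty$ in~\eqref{eq:thm1-4}, with the boundary term $M_n^2$ accounting for the number of vertices whose truncated toll is affected near depth $M_n$, is exactly what converts these $L^1$ estimates into the required $o(n)$ variance bound. Assembling (i)--(iii) gives~\eqref{eq:normality} with $\mu=\Exp f(\tcl)$ and $\gamma^2=\lim_M\gamma_M^2$, and the same estimates deliver the stated asymptotics for $\Exp F(\tcl_n)$ and $\mathrm{Var}\,F(\tcl_n)$.
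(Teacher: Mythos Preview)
This theorem is not proved in the paper at all: it is quoted verbatim from \cite{Naina} (introduced with ``The result relevant to our case can be found in \cite[Theorem 1]{Naina}. For completeness, we will state this theorem in full''), and the paper then \emph{applies} it to the paired-domination toll $\varphi$. So there is no in-paper proof to compare your proposal against.

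As for the proposal itself, your high-level architecture---fringe expansion $F(T)=\sum_v f(T_v)$, truncation $f_M(T)=f(T^{(M)})$, CLT for each finite-depth $F_M$, then a three-step approximation argument---is indeed the strategy of \cite{Naina}. But the part you flag as ``the main obstacle'' really is one, and your sketch does not close it. You only have $L^1$ control on $f-f_M$ from \eqref{eq:thm1-2}--\eqref{eq:thm1-3}, and you assert that the summability condition \eqref{eq:thm1-4} ``converts these $L^1$ estimates into the required $o(n)$ variance bound'' without saying how. An $L^1$ bound on a bounded toll does give an $L^2$ bound on the toll, but the variance of the \emph{sum} $F-F_M$ involves covariances across all ancestral pairs, and controlling those requires the size-biased structure and the specific form of $a_n$ (the $M_n^2$ term is there to bound the number of ancestral pairs within distance $M_n$, and the summability is used via a Borel--Cantelli/subsequence argument, not a direct variance inequality). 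Likewise, your step (ii)---that $\gamma_M^2$ is Cauchy---needs the martingale-type control coming from \eqref{eq:thm1-2}, not just the existence of a limit. So the plan is right in outline, but the passage from $L^1$ toll bounds to $o(n)$ variance of the aggregate is where the actual proof lives, and that step is not supplied here.
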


In order to prove Theorem~\ref{thm:clt}, we need to show that the toll function $\varphi$ associated with the functional $\Phi$ defined in Lemma~\ref{lem:pd-additive} satisfies the conditions of Theorem~\ref{thm:normality}. This is our main goal in the proof. The conditions of Theorem~\ref{thm:normality} essentially say that there are not ``too many" trees  $T$ such that $f(T)\neq f(T^{(M)})$. For our cases, this means that if we run our algorithm (twice), once with input $T$ and once with $T^{(M)}$, then most of the time the label of the root remains unchanged. To make this precise, we consider the following set of trees for $M> 3$:
\[
\mathcal{B}_{M}=\left\lbrace T \ \text{possibly infinite}\ : \ \exists N>M,  \ \ell\Big(\rm{root}(T^{(N)})\Big)\neq \ell\left(\rm{root}(T^{(M)})\right) \right\rbrace,
\]
where $\ell(\rm{root}(T))$ indicates the label of the root of $T$, i.e.,  $B$, $F$, $R$ or $P$, after running our algorithm with input $T$.  Observe that there exists a tree $\tau_0$ of height $3$ with the property that $\Pr(\mathcal{T}=\tau_0)>0$ and that for every finite $T$ such that $T^{(3)}=\tau_0$,
\[
\ell(\rm{root}(T))= \ell\left(\rm{root}(T^{(3)})\right).
\]
The existence of a tree $\tau_0$ with the above property is crucial in the proof of Theorem~\ref{thm:clt}, so let us explicitly construct an example of such a tree. We define
\begin{equation}\label{eq:d0}
d_0:=\min \{d\geq 2 :\ \Pr(\xi =d)\neq 0 \},
\end{equation}
whose existence follows from our assumptions on $\xi$. Then, the tree $\tau_0$ described in Figure~\ref{fig:tau}, where each of the four internal vertices has outdegree $d_0$, satisfies the desired properties. It is easy to see that if we run the algorithm on any extension of $\tau_0$, then the root will always be labelled $P$ since the root of the left-most branch will be labelled~$R$.
\begin{figure}[htb]
\begin{center}
\begin{tikzpicture}[scale=.8,style=thick,x=1cm,y=1cm]
\def\vr{2.5pt} 
\path (4,4.5) coordinate (v1);
\path (0,3) coordinate (v2);
\path (2.5,3) coordinate (v3);
\path (5.5,3) coordinate (v4);
\path (8,3) coordinate (v5);
\path (-1,1.5) coordinate (v6);
\path (1,1.5) coordinate (v7);
\path (7,1.5) coordinate (v8);
\path (9,1.5) coordinate (v9);
\path (8,0) coordinate (v10);
\path (10,0) coordinate (v11);
\draw (v1)--(v2);
\draw (v1)--(v3);
\draw (v1)--(v4);
\draw (v1)--(v5);
\draw (v2)--(v6);
\draw (v2)--(v7);
\draw (v5)--(v8);
\draw (v5)--(v9);
\draw (v10)--(v9);
\draw (v11)--(v9);
\draw [decorate,decoration={brace,amplitude=6pt},xshift=0pt,yshift=0pt]
    (1.1,1.1) -- (-1.2,1.1) node [black,midway,xshift=0cm,yshift=-0.5cm]
    {$d_0$};
\draw [decorate,decoration={brace,amplitude=6pt},xshift=0pt,yshift=0pt]
    (5.7,2.6) -- (2.3,2.6) node [black,midway,xshift=0cm,yshift=-0.5cm]
    {$d_0-2$};

%
%
\draw (v1) [fill=black] circle (\vr);
\draw (v2) [fill=black] circle (\vr);
\draw (v3) [fill=white] circle (\vr);
\draw (v4) [fill=white] circle (\vr);
\draw (v5) [fill=black] circle (\vr);
\draw (v6) [fill=white] circle (\vr);
\draw (v7) [fill=white] circle (\vr);
\draw (v8) [fill=white] circle (\vr);
\draw (v9) [fill=black] circle (\vr);
 \draw (v10) [fill=white] circle (\vr);
 \draw (v11) [fill=white] circle (\vr);
 \draw[anchor = south] (0,1.2) node {{ $\cdots$}};
 \draw[anchor = south] (4,2.7) node {{ $\cdots$}};
 \draw[anchor = south] (8,1.2) node {{ $\cdots$}};
 \draw[anchor = south] (9,-.3) node {{ $\cdots$}};
\end{tikzpicture}
\end{center}
\caption{Fixed tree $\tau_0$ of height $3$.}\label{fig:tau}
\end{figure}
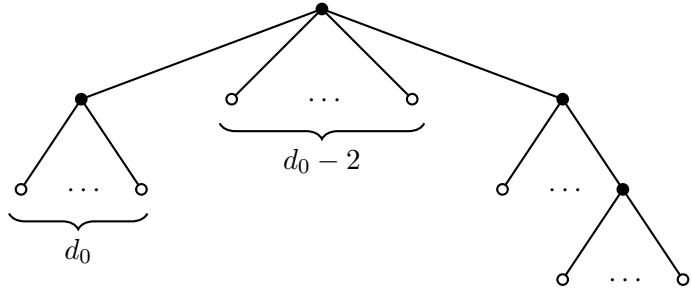

\begin{lemma}\label{lem:t-bound}
There exists a constant $c_1\in (0,1)$ such that
$\Pr(\mathcal{T}\in \mathcal{B}_{M}) = O (c_1^{M})$ as $M\to\infty$.
\end{lemma}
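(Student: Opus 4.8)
The strategy is to show that membership in $\mathcal{B}_M$ forces the existence of a "bad" path from the root down to depth roughly $M$, along which the tree must avoid a certain local configuration at every step, and that each step independently kills the event with probability bounded away from $1$. Concretely, I would first invoke Lemma~\ref{lem:pd-additive1}: the label of the root of $T^{(N)}$ is a deterministic function of the labels of the roots of the branches at depth~$1$, which in turn are deterministic functions of the labels at depth~$2$, and so on. Hence if $\ell(\mathrm{root}(T^{(N)})) \neq \ell(\mathrm{root}(T^{(M)}))$ for some $N > M$, then propagating the discrepancy downward, there is a vertex $v$ at depth $M-1$ (or thereabouts) whose label in $T^{(M)}$ (where it is treated as a leaf or near-leaf, since its deeper descendants are truncated) differs from its label in the larger tree, and the discrepancy chains all the way from the root to $v$. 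This gives a path $r = v_0, v_1, \ldots, v_{M-1} = v$ with the property that at each $v_i$, changing the label of $v_{i+1}$ actually changes the label of $v_i$ — i.e., $v_i$ is "label-sensitive" to its child $v_{i+1}$.

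**Key steps.** (1) Formalize "label-sensitivity": using the four equivalences in Lemma~\ref{lem:pd-additive1}, determine for each label $X \in \{B,F,R,P\}$ which single-child-label changes of a child can flip the parent's label. The point is that for a vertex to be sensitive, its remaining children (other than the distinguished one on the path) must lie in a restricted label set — and crucially, this forces the \emph{subtree hanging off} $v_i$ away from the path to avoid a positive-probability local pattern. (2) Quantify the escape probability: invoke the fixed tree $\tau_0$ of height $3$ constructed just before the lemma. The key property established there is that \emph{any} extension of $\tau_0$ has its root labelled $P$ unconditionally — so if at some depth-$i$ vertex $v_i$ along the bad path the branch structure in a window of depth $3$ below $v_i$ (off the path) matches $\tau_0$, then $v_i$'s label is pinned and cannot be sensitive in the required way, or more simply the propagation of the discrepancy is blocked. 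Since $\Pr(\mathcal{T} = \tau_0) > 0$, at each of roughly $M/4$ disjoint windows along the path there is a constant probability $q > 0$ of hitting $\tau_0$ and thereby blocking badness; the windows are on independent subtrees, so $\Pr(\mathcal{T} \in \mathcal{B}_M) \leq (1-q)^{\lfloor M/4 \rfloor - O(1)}$. (3) Set $c_1 = (1-q)^{1/4} \in (0,1)$ and absorb the $O(1)$ and the summing-over-choices-of-path factor (which is at most the number of vertices at depth $M$, whose expectation is bounded since $\Exp(w_M(\mathcal{T})) = 1$) into the $O(\cdot)$ constant to conclude $\Pr(\mathcal{T} \in \mathcal{B}_M) = O(c_1^M)$.

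**Main obstacle.** The delicate part is step~(1)–(2): making precise the claim that a discrepancy at depth $N$ can only reach the root through a chain of label-sensitive vertices, \emph{and} that each such vertex, because of the constraints Lemma~\ref{lem:pd-additive1} places on its off-path children, has a genuinely positive conditional probability of being "neutralized" by a $\tau_0$-pattern below it. One must check that the off-path subtree at $v_i$ is itself an independent copy of $\mathcal{T}$ (which it is, by the branching-process structure), that conditioning on the bad event does not destroy this independence in a harmful way (handled by a union bound over the at-most-$w_M$ candidate paths rather than conditioning), and that the $\tau_0$-blocking argument is compatible with \emph{whichever} of the four sensitive configurations arises at $v_i$ — equivalently, that placing a $\tau_0$ somewhere among $v_i$'s descendants forces $v_i$'s label to stabilize regardless of the rest. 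Once the "sensitive chain forces a long path avoiding a fixed positive-probability pattern" picture is nailed down, the geometric decay and the bound $\Exp(w_M(\mathcal{T})) = 1$ (already noted in the excerpt) finish the estimate routinely.
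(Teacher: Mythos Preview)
Your plan captures the right intuition, but the paper's proof is considerably more direct and avoids the path-tracking you outline. The paper argues by a single recursion in depth-$3$ steps: if $\mathcal{T}\in\mathcal{B}_M$ then (i) necessarily $\mathcal{T}^{(3)}\neq\tau_0$, since the root label is pinned whenever the first three levels match $\tau_0$; and (ii) by Lemma~\ref{lem:pd-additive1} the label discrepancy must pass through some vertex at depth~$3$, so at least one of the $w_3(\mathcal{T})$ subtrees hanging at depth~$3$ lies in $\mathcal{B}_{M-3}$. Conditioning on $\mathcal{T}^{(3)}=T$ and using that those subtrees are i.i.d.\ copies of $\mathcal{T}$ gives
\[
\Pr(\mathcal{T}\in\mathcal{B}_M)\ \le\ \sum_{T\neq\tau_0}\Pr(\mathcal{T}^{(3)}=T)\,w_3(T)\,\Pr(\mathcal{T}\in\mathcal{B}_{M-3})
\ =\ \bigl(\mathbb{E}\,w_3(\mathcal{T})-w_3(\tau_0)\Pr(\mathcal{T}=\tau_0)\bigr)\Pr(\mathcal{T}\in\mathcal{B}_{M-3}),
\]
and since $\mathbb{E}\,w_3(\mathcal{T})=1$ the bracket is a constant strictly below~$1$, whence geometric decay by iteration. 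No union bound over root-to-leaf paths and no separate appeal to $\mathbb{E}\,w_M(\mathcal{T})=1$ is needed: the branching factor is already absorbed into the recursion via the $w_3(T)$ term.

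Your version is essentially this recursion unrolled, but two points in the write-up would need repair before it goes through. First, the parenthetical ``(off the path)'' is misleading: the $\tau_0$-window that pins $v_i$'s label must be the full depth-$3$ neighbourhood of $v_i$ \emph{including} the on-path child $v_{i+1}$; an off-path sibling subtree equal to $\tau_0$ only forces one child of $v_i$ to carry label $P$, which by the four cases of Lemma~\ref{lem:pd-additive1} does \emph{not} pin $\ell(v_i)$ against changes in $\ell(v_{i+1})$. Second, the claim that ``the windows are on independent subtrees'' is not correct as stated: the windows at $v_0,v_3,v_6,\ldots$ sit on \emph{nested} subtrees along the same path, and extracting genuine product structure requires either the recursion above or a many-to-one/spine argument (which would bring in $\hat\tcl$ and size-biased degrees along the spine --- machinery the paper only invokes later, for Lemma~\ref{lem:ht-bound}). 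Both issues are fixable, but the three-line recursion sidesteps them entirely.
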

\begin{proof}
Observe, by definition of $\tau_0$, that for any tree $T$ to be in $\mathcal{B}_{M}$, we must have $T^{(3)}\neq \tau_0$. Hence, we can write
\[
\Pr(\mathcal{T}\in \mathcal{B}_{M}) =\sum_{T\neq \tau_0} \Pr(\mathcal{T}^{(3)}= T) \Pr(\mathcal{T}\in \mathcal{B}_{M} | \mathcal T^{(3)}=T ).
\]
For any tree $T$, recall that $w_3(T)$ denotes the number of vertices at distance 3 from the root. From the definition of $\mathcal{T}$, given the event $\{\mathcal{T}^{(3)}= T\}$, the rest of $\mathcal{T}$ consists of $w_3(T)$ independent copies of $\mathcal{T}$. On the other hand, if we also have $\mathcal{T}\in \mathcal{B}_{M}$, then at least one of those $w_3(T)$ branches of $\tcl$ is an element of $\mathcal{B}_{M-3}$ (if $M>3$). Thus, we have
\[
\Pr(\mathcal{T}\in \mathcal{B}_{M} | \mathcal T^{(3)}=T )\leq w_3(T) \Pr(\mathcal{T}\in \mathcal{B}_{M-3}).
\]
Therefore,
\begin{align*}
\Pr(\mathcal{T}\in \mathcal{B}_{M}) & \leq  \Pr(\mathcal{T}\in \mathcal{B}_{M-3})\sum_{T\neq \tau_0} \Pr(\mathcal{T}^{(3)}= T)w_3(T)\\
& =\Pr(\mathcal{T}\in \mathcal{B}_{M-3}) \Big(\mathbb{E}(w_3(\mathcal{T}))-w_3(\tau_0)\Pr(\mathcal{T}=\tau_0)\Big).
\end{align*}
It is clear that the term in brackets in the last expression is independent of $M$ and is nonnegative. Moreover, since we know that $\mathbb{E}(w_3(\mathcal{T}))=1$ and $w_3(\tau_0)\Pr(\mathcal{T}=\tau_0)>0$, this term is also strictly less than 1. Thus, the lemma is proved by iteration.
\end{proof}

\begin{lemma}\label{lem:ht-bound}
There exists a constant $c_2\in (0,1)$ such that
$\Pr(\hat \tcl\in \mathcal{B}_{M})=O(c_2^{M})$ as $M\to\infty$.
\end{lemma}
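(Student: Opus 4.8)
\medskip
\noindent\textit{Proof sketch.}
The plan is to exploit the spine structure of the infinite size-biased tree $\hat\tcl$, together with Lemma~\ref{lem:t-bound} and a ``freezing along the spine'' argument. Recall that $\hat\tcl$ consists of an infinite spine $v_0,v_1,v_2,\dots$ rooted at $v_0$, where each spine vertex $v_k$ carries a random number $L_k$ of off-spine branches, each an independent copy of $\tcl$; the $L_k$ are i.i.d.\ with $\mathbb E[L_k]=\mathrm{Var}(\xi)=\sigma^2<\infty$ and $\Pr(L_k\ge 1)>0$ (the latter holds since $\mathbb E\xi=1$ and $\Pr(\xi=0)>0$ force $\Pr(\xi\ge 2)>0$), and the off-spine branches at distinct spine vertices are independent. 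Let $\sigma_0$ be the star with $d_0$ leaves, where $d_0$ is as in~\eqref{eq:d0}; then $\Pr(\tcl=\sigma_0)=\Pr(\xi=d_0)\Pr(\xi=0)^{d_0}>0$ and the root of $\sigma_0$ is labelled $R$ (this is precisely the computation used for the leftmost branch of $\tau_0$). Call a spine vertex $v_k$ \emph{absorbing} if at least one of its off-spine branches equals $\sigma_0$; then $v_0,v_1,\dots$ are absorbing independently, each with a fixed probability $\rho=1-\mathbb E\big[(1-\Pr(\tcl=\sigma_0))^{L_0}\big]\in(0,1)$.

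The structural heart of the argument is a \emph{freezing property}. Since $\sigma_0$ has height~$1$ and its root is labelled $R$, in any truncation $\hat\tcl^{(N)}$ reaching at least two levels below an absorbing vertex $v_k$ (that is, $N\ge k+2$) the vertex $v_k$ has a child labelled $R$, and hence, by Lemma~\ref{lem:pd-additive1}, is labelled $P$. Once $\ell(v_k)=P$ is forced, the label of the root of $\hat\tcl^{(N)}$ is computed by the bottom-up rule of Lemma~\ref{lem:pd-additive1} from $\ell(v_k)=P$ and the root labels of the finitely many off-spine branches hanging at $v_0,\dots,v_{k-1}$; nothing strictly below $v_k$ intervenes. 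Consequently, if $v_k$ is absorbing with $k\le\lfloor M/2\rfloor-1$ and, in addition, no off-spine branch attached to $v_0,\dots,v_{k-1}$ lies in $\mathcal B_{\lfloor M/2\rfloor}$, then the root label of $\hat\tcl^{(N)}$ is the same for all $N\ge M$: each such branch has its root at depth at most $\lfloor M/2\rfloor-1$, so truncating $\hat\tcl$ at any level $N\ge M$ truncates that branch at level at least $\lfloor M/2\rfloor$, and since the sets $\mathcal B_m$ are nonincreasing in $m$ and the branch is not in $\mathcal B_{\lfloor M/2\rfloor}$, its root label is unchanged across all such $N$. On this event, $\hat\tcl\notin\mathcal B_M$.

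It remains to bound the probability of the complementary event. Let $H$ be the event that at least one of $v_0,\dots,v_{\lfloor M/2\rfloor-1}$ is absorbing and that no off-spine branch attached to these spine vertices lies in $\mathcal B_{\lfloor M/2\rfloor}$; by the previous paragraph, $H$ implies $\hat\tcl\notin\mathcal B_M$. By independence of the absorbing events, the probability that none of $v_0,\dots,v_{\lfloor M/2\rfloor-1}$ is absorbing is $(1-\rho)^{\lfloor M/2\rfloor}$. The expected number of off-spine branches attached to these vertices is $\lfloor M/2\rfloor\,\sigma^2$, so by Markov's inequality and Lemma~\ref{lem:t-bound} the probability that one of them lies in $\mathcal B_{\lfloor M/2\rfloor}$ is at most $\lfloor M/2\rfloor\,\sigma^2\,\Pr(\tcl\in\mathcal B_{\lfloor M/2\rfloor})=O\big(M c_1^{M/2}\big)$. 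Therefore
\[
\Pr(\hat\tcl\in\mathcal B_M)\ \le\ \Pr(H^{\mathrm c})\ \le\ (1-\rho)^{\lfloor M/2\rfloor}+O\big(M c_1^{M/2}\big),
\]
which is $O(c_2^M)$ for any fixed $c_2$ with $\max\{\sqrt{1-\rho},\sqrt{c_1}\}<c_2<1$.

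I expect the point needing the most care to be conceptual rather than computational: the recursion proving Lemma~\ref{lem:t-bound} --- conditioning on $T^{(3)}$ and appealing to the absorbing tree $\tau_0$ --- does not apply to $\hat\tcl$ directly, because the spine is infinite, so $\hat\tcl^{(3)}$ is never equal to $\tau_0$ and the root is never ``instantly'' frozen. The fix is to search for the first absorbing vertex \emph{along the spine} (reached after a geometric number of steps) rather than at the root, and to confine the search to the first $\lfloor M/2\rfloor$ spine vertices, so that only $O(M)$ off-spine branches need to have stabilised and each is truncated deeply enough (at level at least $\lfloor M/2\rfloor$) for Lemma~\ref{lem:t-bound} to be useful; tracking the truncation levels induced inside those off-spine branches is the only genuine bookkeeping.
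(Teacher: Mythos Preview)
Your argument is correct, but it takes a genuinely different and more laborious route than the paper. The paper simply repeats the recursion from Lemma~\ref{lem:t-bound}: condition on $\hat\tcl^{(3)}$, observe that given $\hat\tcl^{(3)}=T$ the branches hanging from the $w_3(T)$ vertices at level~$3$ consist of one independent copy of $\hat\tcl$ and $w_3(T)-1$ independent copies of $\tcl$, and hence
\[
\Pr(\hat\tcl\in\mathcal B_M)\ \le\ \Bigl(1-\Pr(\hat\tcl^{(3)}=\tau_0)\Bigr)\Pr(\hat\tcl\in\mathcal B_{M-3})\ +\ \mathbb E\bigl(w_3(\hat\tcl)-1\bigr)\,\Pr(\tcl\in\mathcal B_{M-3}).
\]
The second term is $O(c_1^{M})$ by Lemma~\ref{lem:t-bound}, the coefficient in front of the first term is strictly less than~$1$, and iteration finishes the job in two lines. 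Your spine-decomposition approach --- searching for an absorbing vertex along the spine, splitting at depth $\lfloor M/2\rfloor$, and invoking monotonicity of $\mathcal B_m$ for the off-spine branches --- is a valid and more structurally explicit alternative, but it is considerably longer.

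The reason you went this route is a misconception in your final paragraph: you assert that ``$\hat\tcl^{(3)}$ is never equal to $\tau_0$'' because the spine is infinite. This is false. The tree $\tau_0$ has height~$3$ and $w_3(\tau_0)=d_0\ge 2$, and by the size-biasing formula $\Pr(\hat\tcl^{(3)}=\tau_0)=w_3(\tau_0)\,\Pr(\tcl^{(3)}=\tau_0)>0$; the spine vertex $v_3$ simply sits among the level-$3$ leaves of $\tau_0$. So the Lemma~\ref{lem:t-bound} recursion \emph{does} transfer to $\hat\tcl$ essentially verbatim, with the only new feature being that exactly one of the level-$3$ subtrees is a shifted copy of $\hat\tcl$ rather than $\tcl$. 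Your proof stands, but the detour was unnecessary.
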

\begin{proof}
The proof follows the same lines as the proof of the previous lemma. We have
\[
\Pr(\hat \tcl \in \mathcal{B}_{M}) =\sum_{T\neq \tau_0} \Pr(\hat \tcl ^{(3)}= T) \Pr(\hat \tcl \in \mathcal{B}_{M} | \hat \tcl^{(3)}=T ).
\]
Given the event $\{\hat{\tcl}^{(3)} = T\}$, the rest of $\hat{\tcl}$ consists of $w_3(T)$ independent random trees, where one is a copy of $\hat{\tcl}$ and the rest are copies of $\mathcal{T}$. Note that these branches are ordered, and the position of the copy of $\hat{\tcl}$ in this order is uniform. Hence, we have
\[
\Pr(\hat \tcl \in \mathcal{B}_{M} | \hat \tcl^{(3)}=T )\leq \Pr(\hat \tcl \in \mathcal{B}_{M-3})+(w_3(T)-1)\Pr( \tcl \in \mathcal{B}_{M-3}).
\]
Thus, we deduce that
\[
\Pr(\hat \tcl \in \mathcal{B}_{M})\leq \left(\sum_{T\neq \tau_0} \Pr(\hat \tcl ^{(3)}= T)\right)\Pr(\hat \tcl \in \mathcal{B}_{M-3})+\mathbb{E}(w_3(\hat \tcl)-1) \Pr( \tcl \in \mathcal{B}_{M-3}).
\]
We know that $\mathbb{E}(w_3(\hat{\tcl}) - 1)$ is finite (see for example \cite[Lemma 2.3]{janson2006random}); hence, by Lemma~\ref{lem:t-bound}, the second term on the right-hand side is bounded by $O(c_1^M)$. On the other hand,
\[
0\leq \sum_{T\neq \tau_0} \Pr(\hat \tcl ^{(3)}= T)=1- \Pr(\hat \tcl ^{(3)}= \tau_0)<1.
\]
Once again, the lemma is proved by iteration.
\end{proof}

We are now ready to prove our main theorem.

\begin{proof}[Proof of Theorem~\ref{thm:clt}]
Observe that if $T\notin \mathcal{B}_{M}$, then for every $N\geq M$,
$\varphi(T^{(N)})=\varphi(T^{(M)}).$
So, since the value of $\varphi$ can only be $0$ or $1$, we have
\[
\left|  \varphi(\hat \tcl^{(M)}) -\Exp\left(\varphi(\hat{\tcl}^{(N)})\,  | \, \hat{\mathcal{T}}^{(M)}\right)\right|\leq \mathbbm{1}_{\{\hat\tcl \in \mathcal{B}_M\}}.
\]
Using Lemma~\ref{lem:ht-bound}, we obtain
\begin{equation}\label{eq:local_hat}
\mathbb{E}  \left|  \varphi(\hat \tcl^{(M)})  -\Exp\left(\varphi(\hat{\tcl}^{(N)})\,  | \, \hat{\mathcal{T}}^{(M)}\right)\right|\leq \Pr(\hat\tcl \in \mathcal{B}_M)\ll c_2^M.
\end{equation}

By the same argument, we also have
\begin{equation*}
\mathbb{E} \left|  \varphi(\tcl_{n})- \varphi(\tcl_{n}^{(M)})\right|\leq \Pr(\tcl_n \in \mathcal{B}_M)\leq \frac{\Pr(\tcl \in \mathcal{B}_M)}{\Pr(|\tcl|=n)}.
\end{equation*}
Based on our assumptions on $\xi$, it is well known that $\Pr(|\tcl| = n) = \Theta(n^{-3/2})$; see Janson~\cite[Equation~(4.13)]{janson2016asymptotic}, which also refers to Kolchin~\cite{Kolchin1984}.
So, by Lemma~\ref{lem:t-bound}, we have
\begin{equation}\label{eq:localn}
\mathbb{E} \left|  \varphi(\tcl_{n})- \varphi(\tcl_{n}^{(M)})\right|=O(n^{3/2}c_1^{M}).
\end{equation}
If we choose $M_n := (\log n)^2$ and $p_M := c_3^M$, where $\max\{c_1, c_2\} < c_3 < 1$, then it is not difficult to verify that all conditions of Theorem~\ref{thm:normality} are satisfied. Therefore, we obtain the asymptotic normality of $\Phi(\tcl_n)$, with the estimates for its mean and variance exactly as stated in Theorem~\ref{thm:normality}. Furthermore, the same result follows for $\gpr(\tcl_n)$, since by Lemma~\ref{lem:pd-additive}, we have
\[
|2\Phi(\tcl_n) - \gpr(\tcl_n)| \leq 2.
\]
Hence, the constant term  $\mpr$ in Theorem~\ref{thm:clt} is equal to $2\mathbb{E}(\varphi(\tcl))$. On the other hand, we also have
\[
\mathrm{Var}(\gpr(\tcl_n))=\spr^2n+o(n).
\]

It remains to show that the constant $\spr^2$ is nonzero, to guarantee a nondegenerate Gaussian limit law for $\gpr(\tcl_n)$.  This follows from an argument already used in the proof of \cite[Theorem 10]{Naina}. By the same reasoning, it is enough to show that there exist two finite trees, $\tau_1$ and $\tau_2$, of the same size which satisfy the following properties:
\begin{itemize}
\item $\Pr(\tcl = \tau_i) > 0$ for $i \in \{1,2\}$,
\item $\gpr(\tau_1)\neq \gpr(\tau_2)$, and 
\item $\ell(\rm{root}(\tau_1))=\ell(\rm{root}(\tau_2)).$
\end{itemize}

To briefly explain the idea, if we condition on the structure obtained by deleting every occurrence of $\tau_1$ or $\tau_2$ as a fringe subtree of $\tcl_n$, then $\gpr(\tcl_n)$ is equal to a fixed term plus a constant (equal to $|\gpr(\tau_1) - \gpr(\tau_2)|$) times a sum of independent, nondegenerate Bernoulli variables, where the number of these Bernoulli variables is exactly the number of occurrences of $\tau_1$ and $\tau_2$ in $\tcl_n$. Since, on average, the number of such occurrences in $\tcl_n$ is $\gg n$ (see, for example, \cite[Theorem 1.3]{janson2016asymptotic} which refers to \cite{aldous1991} among others), the law of total variance implies that $\mathrm{Var}(\gpr(\tcl_n)) \gg n$.

Now, to construct such trees $\tau_1$ and $\tau_2$, let $d_0$ be as defined in Equation~\eqref{eq:d0}. Then the two full $d_0$-ary trees shown in Figure~\ref{fig:tau12} satisfy the second property. We denote these trees, from left to right in Figure~\ref{fig:tau12}, by $t_1$ and $t_2$, where the dots indicate leaves that need to be added to complete the trees as full $d_0$-ary trees. It is clear that the paired domination numbers of $t_1$ and $t_2$ are 4 and 6, respectively, with the $\gpr$-set consisting of the black-coloured vertices in the figure.
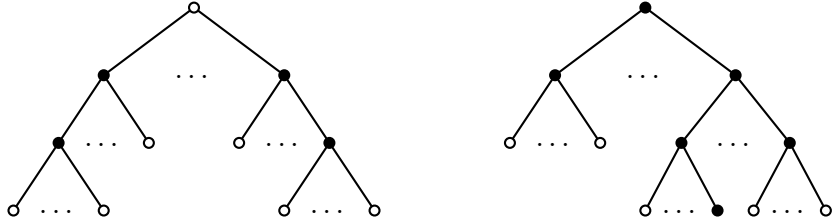
\begin{figure}[htb]
\begin{center}
\begin{tikzpicture}[scale=.75,style=thick,x=.8cm,y=.8cm]
\def\vr{2.5pt} 

\path (0,4.5) coordinate (v1a);
\path (-2,3) coordinate (v2a);
\path (2,3) coordinate (v3a);
\path (-3,1.5) coordinate (v4a);
\path (-1,1.5) coordinate (v5a);
\path (1,1.5) coordinate (v6a);
\path (3,1.5) coordinate (v7a);
\path (-4,0) coordinate (v8a);
\path (-2,0) coordinate (v9a);
\path (2,0) coordinate (v10a);
\path (4,0) coordinate (v11a);

\path (10,4.5) coordinate (v1b);
\path (8,3) coordinate (v2b);
\path (12,3) coordinate (v3b);
\path (10.8,1.5) coordinate (v6b); 
\path (9,1.5) coordinate (v5b);
\path (7,1.5) coordinate (v4b);
\path (13.2,1.5) coordinate (v7b);
\path (10,0) coordinate (v8b);
\path (11.6,0) coordinate (v9b);
\path (12.4,0) coordinate (v10b);
\path (14,0) coordinate (v11b);

\draw (v1a)--(v2a);
\draw (v1a)--(v3a);
\draw (v2a)--(v4a);
\draw (v2a)--(v5a);
\draw (v3a)--(v6a);
\draw (v3a)--(v7a);
\draw (v4a)--(v8a);
\draw (v4a)--(v9a);
\draw (v7a)--(v10a);
\draw (v7a)--(v11a);

\draw (v1b)--(v2b);
\draw (v1b)--(v3b);
\draw (v2b)--(v5b);
\draw (v2b)--(v4b);
\draw (v3b)--(v6b);
\draw (v3b)--(v7b);

\draw (v6b)--(v8b); 
\draw (v6b)--(v9b);
\draw (v7b)--(v10b);
\draw (v7b)--(v11b);

\foreach \v in {2a,3a,4a,7a} \draw (v\v) [fill=black] circle (\vr);
\foreach \v in {1a,5a,6a,8a,9a,10a,11a} \draw (v\v) [fill=white] circle (\vr);

\foreach \v in {1b,2b,3b,7b,6b,9b} \draw (v\v) [fill=black] circle (\vr);
\foreach \v in {4b,5b,8b,10b,11b} \draw (v\v) [fill=white] circle (\vr);

\draw[anchor=south] (0,2.7) node {\dots};
\draw[anchor=south] (-2,1.2) node {\dots};
\draw[anchor=south] (2,1.2) node {\dots};
\draw[anchor=south] (-3,-.3) node {\dots};
\draw[anchor=south] (3,-.3) node {\dots};

\draw[anchor=south] (10,2.7) node {\dots};
\draw[anchor=south] (8,1.2) node {\dots};
\draw[anchor=south] (12,1.2) node {\dots};
\draw[anchor=south] (10.8,-.3) node {\dots};
\draw[anchor=south] (13.2,-.3) node {\dots};

\end{tikzpicture}
\end{center}
\caption{Two trees, $t_1$ and $t_2$, of the same size with different paired domination numbers.}\label{fig:tau12}
\end{figure}

Finally, we obtain the trees $\tau_1$ and $\tau_2$ by attaching $t_1$ and $t_2$ (respectively) to the rightmost leaf at level 3 of the tree $\tau_0$ shown in Figure~\ref{fig:tau}. It is straightforward to verify that the trees $\tau_1$ and $\tau_2$ constructed in this way satisfy all the desired properties. This completes the proof. \end{proof}

We conclude this section with a method for computing the constant $\mpr$ and some further discussion. From the main theorem, we have $\mpr = 2\mathbb{E}(\varphi(\tcl))$, and since $\varphi$ is an indicator function for $\mathbb{T}_R$, we have
\begin{align}
    \mpr=2\Pr(\tcl\in \mathbb{T}_R).
\end{align}
It is convenient to define $x_X:=\Pr(\tcl\in \mathbb{T}_X)$ for each $X\in \{B,F,R,P\}$. Then, for each $X$, we can write
\[
x_X =\sum_{k=0}^{\infty}\Pr(\xi=k)\Pr(\tcl\in \mathbb{T}_X | \deg \tcl =k).
\]
Applying the characterisations in Lemma~\ref{lem:pd-additive1} , we immediately deduce that
\begin{align*}
\Pr(\tcl\in \mathbb{T}_B | \deg \tcl =k) & = \Pr(\tcl\in \mathbb{T}_F)^k,\\
\Pr(\tcl\in \mathbb{T}_F | \deg \tcl =k) & = \left(\Pr(\tcl\in \mathbb{T}_P)+\Pr(\tcl\in \mathbb{T}_F)\right)^k-\Pr(\tcl\in \mathbb{T}_F)^k,\\
\Pr(\tcl\in \mathbb{T}_R | \deg \tcl =k) & = \left(\Pr(\tcl\in \mathbb{T}_B)+\Pr(\tcl\in \mathbb{T}_P)+\Pr(\tcl\in \mathbb{T}_F)\right)^k-\left(\Pr(\tcl\in \mathbb{T}_P)+\Pr(\tcl\in \mathbb{T}_F)\right)^k, \text{ and }\\
\Pr(\tcl\in \mathbb{T}_P | \deg \tcl =k) & = 1-\left(\Pr(\tcl\in \mathbb{T}_B)+\Pr(\tcl\in \mathbb{T}_P)+\Pr(\tcl\in \mathbb{T}_F)\right)^k.
\end{align*}
Let us denote by $g$ the probability generating function of $\xi$, i.e.,
\[
g(x):=\sum_{k=0}^{\infty}\Pr(\xi=k)x^k,
\]
which converges for every $x$ in the interval $[0,1]$ with $g(1)=1$. Then, from the above equations, the constants $x_B$, $x_F$, $x_R$, and $x_P$ satisfy the following system of equations:
\begin{equation}\label{eq:syst}
\begin{cases}
x_B & =g(x_F)\\
x_F & =g(x_F+x_P)-g(x_F)\\
x_R & =g(x_B+x_F+x_P)-g(x_F+x_P)\\
x_P & = 1- g(x_B+x_F+x_P).
\end{cases}
\end{equation}
Since $\mathrm{Var}(\xi) < \infty$ implies that $g(x)$ is at least twice differentiable on the interval $(0, 1)$, and also that $g'(x) = \mathbb{E}(\xi) = 1$, we can use these facts to show that the above system of equations has a unique solution (e.g., by showing that the Jacobian determinant is nonzero). Then, such a solution can be numerically computed for specific examples, such as binary trees, plane trees, and labelled rooted trees, as shown in Table~\ref{tab:numeric}.
\begin{table}[htb]
\centering
\begin{tabular}{|c|c|c|c|c|c|c|c|}
\hline
 & $g(x)$ & $x_B$ &$x_F$ &$x_R$ & $x_P$\\
\hline
Binary trees & $(1+x)^2/4$  & 0.3347  & 0.1571 & 0.2627 & 0.2455\\
\hline
Plane trees & $(2-x)^{-1}$ & 0.5145 & 0.0563 & 0.2374 & 0.1918\\
\hline
Labelled trees & $\exp(x-1)$ & 0.4085 & 0.1046 & 0.2589 & 0.2281\\
\hline
\end{tabular}
\caption{Numerical solutions of Equation~\eqref{eq:syst}.}\label{tab:numeric}
\end{table}

In the case of labelled trees, each Cayley tree on the vertex set $\{1, 2, \dots, n\}$ corresponds to $n$ distinct labelled rooted trees, determined by the choice of the root. Therefore, choosing a random Cayley tree on the vertex set $\{1, 2, \dots, n\}$ uniformly is equivalent to selecting a labelled rooted tree and then forgetting the root. As a result, the distribution of the paired domination number of a random Cayley tree of $n$ is also asymptotically normal, with mean and variance as stated in Theorem~\ref{thm:clt}.

Figure~\ref{fig:histo} shows a histogram of paired domination numbers of 20000 randomly (and independently) sampled Cayley trees of size 10000. The simulation was run using SageMath~\cite{sagemath}. The horizontal axis represents the actual value of the paired domination number.
\begin{figure}[htb]
    \centering
    \includegraphics[width=0.6\linewidth]{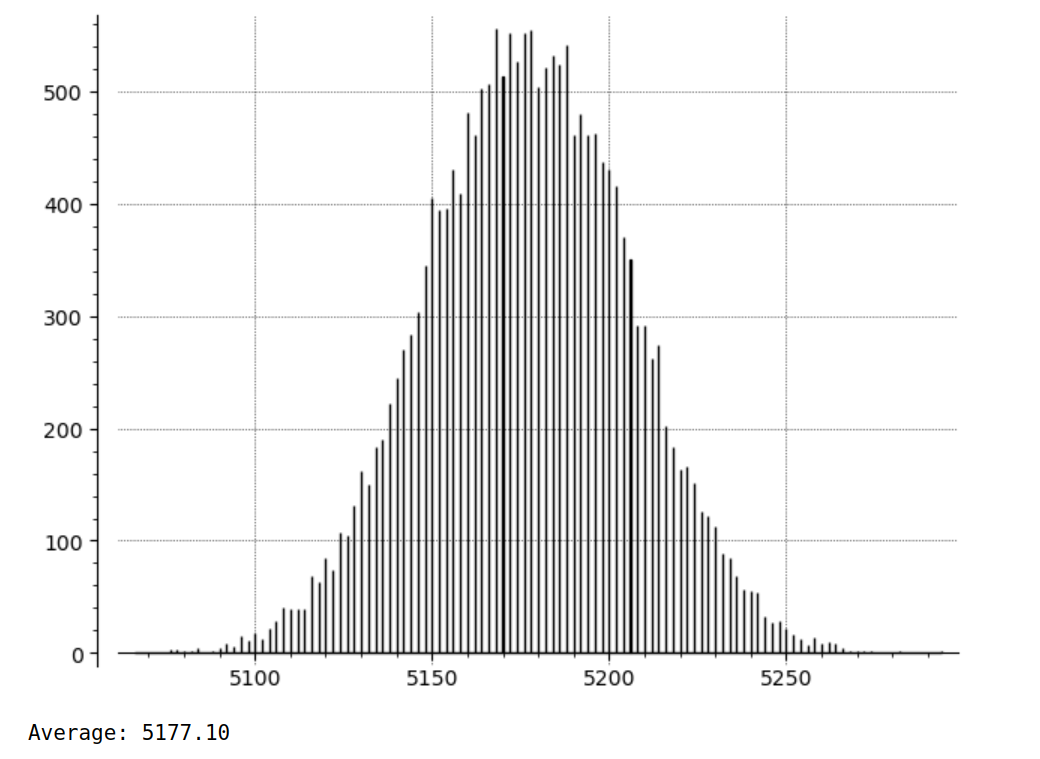}
    \caption{Histogram from $20000$ random samples of\\ Cayley trees of order $n=10000$ }
    \label{fig:histo}
\end{figure}

It is, in fact, quite common to obtain central limit theorems for various graph-theoretic parameters in random trees. In Banderier~et~al.~\cite{banderier2009}, it is shown that the independence number, the path covering number, and the size of the kernel all satisfy a central limit theorem for simply generated trees. In Fuchs~et~al.~\cite{Fuchs2021ANO}, the same results were obtained for the independence number, the domination number, and the clique cover number in random recursive trees and binary search trees, which are other known models of random trees. Similar work on the independence number, the domination number, and the total domination number in conditioned Galton--Watson trees was conducted in the Master's thesis of Rakotoniaina~\cite{rakotoniaina2024}, supervised by the second author.

It is well known that simply generated trees and conditional Galton--Watson trees are essentially equivalent, see \cite[Section 1.2.7]{drmota2009random}. Hence, it might be possible to use the generating function approach as in Banderier~et~al.~\cite{banderier2009} to prove Theorem~\ref{thm:clt}. This is based on a method given in Drmota~\cite{drmota2009random}. However, to use such an analytic-based method, one needs to assume that all moments of $\xi$ are finite (as implied by \cite[Assumption 1]{banderier2009}), and not just the second moment that we assumed here. One advantage of the analytic approach is that it can provide an explicit expression for the constant in the variance. In general, there is no guarantee that such an expression will be useful. For the parameters considered in \cite{banderier2009}, it is not even known if these constants are nonzero, see the remark on \cite[Page~49]{banderier2009}.

Here, we chose to use a probabilistic approach. While we cannot provide an explicit formula for the constant in the variance, we are at least able to prove that, for our parameter, this constant is never zero, using an argument already employed in \cite{Naina}. Hence, we guarantee a nondegenerate Gaussian limit law. The same argument can certainly be used to address the problem stated in the remark on \cite[Page~49]{banderier2009}.

For future work, it would also be interesting to explore other models of random trees. The results in Fuchs~et~al.~\cite{Fuchs2021ANO} were based on a general result in Holmgren and Janson~\cite{holmgren2015limit}. Since the toll function $\varphi$ for the paired domination number is bounded, the same theorem applies in our case as well for the binary search trees and the recursive trees. However, more work needs to be done for other models of random trees, such as $d$-ary increasing trees or even P\'olya trees. The general results in \cite{ralaivaosaona2019increasing, wagner2015central} for these models do not apply to the paired domination number or most of the graph-theoretic parameters mentioned above.

The SageMath code related to this project is available in the GitHub repository~\cite{ralaivaosaona2025paireddomination}.

\section{Acknowledgements}

The first author, Michael A. Henning, acknowledges support from the CoE-MaSS under graph number 2025-014-GRA-Structural Graph Theory. Opinions expressed and conclusions arrived at by the first author are those of the author and not necessarily to be attributed to the CoE-MaSS.

\bibliographystyle{plain}  
\bibliography{references}
\end{document}